
\documentclass{amsart}

\usepackage[utf8]{inputenc}

\usepackage{amssymb,amsmath}


\usepackage[T1]{fontenc}

\allowdisplaybreaks[4]

\theoremstyle{plain}
\newtheorem{theorem}{Theorem}[section]
\newtheorem{lemma}[theorem]{Lemma}
\newtheorem{corollary}[theorem]{Corollary}

\theoremstyle{definition}

\theoremstyle{remark}
\newtheorem{remark}{Remark}

\begin{document}

\title[Orthogonally additive polynomials]{Orthogonally additive polynomials on the algebras of approximable operators}

\author{J. Alaminos}
\address{Departamento de An\' alisis
Matem\' atico\\ Fa\-cul\-tad de Ciencias\\ Universidad de Granada\\
18071 Granada, Spain} 
\email{alaminos@ugr.es}
\author{M. L. C. Godoy}
\address{Departamento de An\' alisis
Matem\' atico\\ Fa\-cul\-tad de Ciencias\\ Universidad de Granada\\
18071 Granada, Spain} 
\email{mlcg.godoy@gmail.com}
\author{A.\,R. Villena}
\address{Departamento de An\' alisis
Matem\' atico\\ Fa\-cul\-tad de
 Ciencias\\ Universidad de Granada\\
18071 Granada, Spain} 
\email{avillena@ugr.es}

\begin{abstract}
Let $X$ and $Y$ be Banach spaces, let $\mathcal{A}(X)$ stands for the algebra of approximable operators on $X$, and let $P\colon\mathcal{A}(X)\to Y$ be an orthogonally additive, continuous $n$-homogeneous polynomial. If $X^*$ has the bounded approximation property, then we show that there exists a unique continuous linear map $\Phi\colon\mathcal{A}(X)\to Y$ such that $P(T)=\Phi(T^n)$ for each $T\in\mathcal{A}(X)$.
\end{abstract}

\subjclass[2010]{47H60, 46H35, 47L10}

\keywords{Algebra of approximable operators; bounded approximation property; orthogonally additive polynomial}

\thanks{The first and the third named authors were supported by MINECO grant MTM2015--65020--P and Junta de Andaluc\'{\i}a grant FQM--185. The second named author was supported by Beca de iniciaci\'on a la investigaci\'on of Universidad de Granada.}

\maketitle

\section{Introduction}

Throughout all  algebras and linear spaces are complex.
Of course, linearity is understood to mean complex linearity.

Let $A$ be an algebra and let $Y$ be a linear space.
A map $P\colon A\to Y$ is said to be \emph{orthogonally additive}
if
\[
a,b\in A, \ ab=ba=0  \ \Rightarrow \ P(a+b)=P(a)+P(b) .
\]
Let $X$ and $Y$ be linear spaces, and let $n\in\mathbb{N}$. A map
$P\colon X\to Y$ is said to be an \emph{$n$-homogeneous polynomial} if
there exists an $n$-linear map $\varphi\colon X^n\to Y$ such that
$P(x)=\varphi(x,\dotsc,x)$ $(x\in X)$. Here and subsequently, $X^n$
stands for the $n$-fold Cartesian product of $X$. Such a map is unique
if it is required to be symmetric. This is a consequence of the
so-called polarization formula which defines $\varphi$ through
\[
\varphi(x_1,\ldots,x_n)=
\frac{1}{n!\,2^n}\sum_{\epsilon_1,\ldots,\epsilon_n=\pm 1}
\epsilon_{1}\cdots\epsilon_{n} P(\epsilon_1x_1+\cdots+\epsilon_{n}x_{n}).
\]
Further, in the case where $X$ and $Y$ are normed spaces, the
polynomial $P$ is continuous if and only if the symmetric $n$-linear
map $\varphi$ associated with $P$ is continuous.

Let $A$ be a Banach algebra.
Given $n\in\mathbb{N}$, a Banach space $Y$, and a continuous linear map
$\Phi\colon A\to Y$, the map $a\mapsto \Phi(a^n)$ is a typical example
of continuous orthogonally additive $n$-homogeneous polynomial,
and a standard problem consists in determining whether these are
precisely the canonical examples of continuous orthogonally
additive $n$-homogeneous polynomials on $A$.
In the case where $A$ is a $C^*$-algebra, it is shown in~\cite{P}
that every continuous $n$-homogeneous polynomial
$P\colon A\to Y$ can be represented in the form
\begin{equation*}
P(a)=\Phi(a^n) \quad  (a\in A)
\end{equation*}
for some continuous linear map $\Phi\colon A\to Y$
(see~\cite{P2,P3} for the case where $A$ is a $C^*$-algebra and $P$ is
a holomorphic map). The references~\cite{A1,A2,V} discuss the case
where~$A$ is a commutative Banach algebra. This paper is concerned with the
problem of representing the  continuous orthogonally additive homogeneous
polynomials in the case where $A$ is the algebra $\mathcal{A}(X)$ of
\emph{approximable operators} on a Banach space $X$. Here,
$\mathcal{B}(X)$ is the Banach algebra of continuous linear operators on $X$,
$\mathcal{F}(X)$ is the two-sided ideal of $\mathcal{B}(X)$ consisting of
finite-rank operators, and $\mathcal{A}(X)$ is the closure of
$\mathcal{F}(X)$ in $\mathcal{B}(X)$ with respect to the operator norm.

Let $X$ be a Banach space. Then $X^*$ denotes the dual of $X$. For $x\in X$
and $f\in X^*$, we write $x\otimes f$  for the operator defined by
$(x\otimes f)(y)=f(y)x$ for each $y\in X$. Let $n\in\mathbb{N}$. Then we
write $\mathbb{M}_n$ for the full matrix algebra of order $n$ over $\mathbb{C}$,
and $\mathfrak{S}_n$ for the symmetric group of order $n$.

\section{Orthogonally additive polynomials on the algebra of finite-rank operators}

\begin{lemma}\label{l1526}
Let $\mathcal{M}$ be a Banach algebra isomorphic to $\mathbb{M}_k$ for
some $k\in\mathbb{N}$, let $Y$ be a Banach space, and
let $P\colon\mathcal{M}\to Y$ be an orthogonally additive $n$-homogeneous
polynomial. Then there exists a unique linear map $\Phi\colon\mathcal{M}\to
Y$ such that
\begin{equation}\label{m00}
P(a)=\Phi(a^n)
\end{equation}
for each $a\in\mathcal{M}$.
Further, if $\varphi\colon\mathcal{M}^n\to Y$ is the symmetric $n$-linear
map associated with $P$ and $e$ is the identity of $\mathcal{M}$, then
\begin{equation}\label{m0}
\Phi(a)=\varphi(a,e,\dotsc,e)
\end{equation}
for each $a\in \mathcal{M}$.
\end{lemma}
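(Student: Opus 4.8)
The plan is to reduce everything to the case $\mathcal{M} = \mathbb{M}_k$ by transporting the polynomial along the algebra isomorphism, and then to compute directly using the matrix units. Let $\varphi\colon\mathcal{M}^n\to Y$ be the symmetric $n$-linear map associated with $P$, and define $\Phi\colon\mathcal{M}\to Y$ by $\Phi(a)=\varphi(a,e,\dotsc,e)$; this is visibly linear, so the substance of the lemma is the identity $P(a)=\Phi(a^n)$ together with uniqueness.

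First I would establish a polynomial identity valid in any algebra for a symmetric $n$-linear orthogonally additive map. Fix a family $e_1,\dotsc,e_m$ of pairwise orthogonal idempotents summing to $e$ (for instance the diagonal matrix units). Orthogonal additivity gives $P(\sum_i \lambda_i e_i)=\sum_i P(\lambda_i e_i)=\bigl(\sum_i\lambda_i^n\bigr)P(e_1)$ after noting that each $e_i$ is conjugate to $e_1$ and $P$ is — wait, that last step needs care, since we do not yet know $P$ is conjugation-invariant. Instead the cleaner route is: expand $P\bigl(\sum_i\lambda_i e_i\bigr)=\varphi\bigl(\sum\lambda_ie_i,\dotsc,\sum\lambda_ie_i\bigr)=\sum_{i_1,\dotsc,i_n}\lambda_{i_1}\cdots\lambda_{i_n}\varphi(e_{i_1},\dotsc,e_{i_n})$, while orthogonal additivity forces $\varphi(e_{i_1},\dotsc,e_{i_n})=0$ whenever the indices are not all equal (apply orthogonal additivity to the pair $e_i, e_j$ with $i\neq j$ and use the polarization formula, or argue coordinatewise). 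Hence $P\bigl(\sum\lambda_ie_i\bigr)=\sum_i\lambda_i^n\varphi(e_i,\dotsc,e_i)$, and comparing with $\Phi\bigl(\bigl(\sum\lambda_ie_i\bigr)^n\bigr)=\Phi\bigl(\sum\lambda_i^ne_i\bigr)=\sum_i\lambda_i^n\varphi(e_i,e,\dotsc,e)$ reduces the diagonal case to showing $\varphi(e_i,\dotsc,e_i)=\varphi(e_i,e,\dotsc,e)$ for a single idempotent $e_i$.

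The main obstacle is the off-diagonal part, i.e.\ showing $P(a)=\Phi(a^n)$ for a general $a\in\mathbb{M}_k$, and establishing the single-idempotent identity above. Here I would exploit that $\mathbb{M}_k$ is generated by nilpotents and use the orthogonality relations among the matrix units $E_{ij}$: for instance $E_{ij}$ and $E_{k\ell}$ multiply to zero in both orders whenever $\{i,j\}\cap\{k,\ell\}=\emptyset$ in the appropriate sense, which lets one split sums of matrix units. A convenient device is to diagonalize: every $a\in\mathbb{M}_k$ with distinct eigenvalues is conjugate to a diagonal matrix, the diagonalizable matrices are dense, and both $P$ and $a\mapsto\Phi(a^n)$ are continuous polynomials, so it suffices to prove the identity on a dense set; combined with the fact that conjugation by an invertible $u$ sends the pairwise-orthogonal family $\{E_{ii}\}$ to another such family, one reduces to the diagonal computation already done — provided one checks that $\Phi$ as defined is conjugation-covariant in the weak sense needed, which follows because $\varphi(uau^{-1},e,\dotsc,e)$ can be related back to $\varphi(a,e,\dotsc,e)$ using that $P$ restricted to the image of a maximal torus is conjugation-consistent.

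Finally, uniqueness: if $\Phi_1,\Phi_2$ both satisfy \eqref{m00}, then $\Phi_1-\Phi_2$ annihilates $a^n$ for every $a\in\mathcal{M}$; since $\mathbb{M}_k$ (hence $\mathcal{M}$) is spanned by $n$-th powers — indeed every matrix unit $E_{ij}$ with $i\neq j$ is nilpotent so equals a difference of powers, and diagonal idempotents are their own $n$-th powers — the linear map $\Phi_1-\Phi_2$ vanishes identically. I expect the off-diagonal identity and the bookkeeping with $\mathfrak{S}_n$ in the polarization expansion to be the part requiring the most care; everything else is linear algebra and continuity.
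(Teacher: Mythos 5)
Your route is genuinely different from the paper's. The paper disposes of this lemma in two lines: it transports $P$ along an isomorphism $\Psi\colon\mathcal{M}\to\mathbb{M}_k$, observes that $\mathbb{M}_k$ is a $C^*$-algebra, and invokes the representation theorem of Palazuelos--Peralta--Villanueva for orthogonally additive polynomials on $C^*$-algebras; the formula $\Phi(a)=\varphi(a,e,\dotsc,e)$ then falls out of polarization. A self-contained matrix-algebra argument of the kind you sketch is perfectly feasible, and note that the continuity you invoke for the density step is automatic: $\mathcal{M}$ is finite-dimensional, so $\varphi$, $P$ and $a\mapsto\Phi(a^n)$ are all continuous without hypothesis.

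As written, though, your proof has a genuine gap exactly where you flag ``the main obstacle'': you never prove $\varphi(p,\dotsc,p)=\varphi(p,e,\dotsc,e)$ for an idempotent $p$, and you never complete the general case, instead appealing to an undefined ``conjugation-covariance'' of $\Phi$. The missing identity is in fact an immediate consequence of the orthogonality computation you already carried out: write $e=p+(e-p)$, expand the last $n-1$ entries of $\varphi(p,e,\dotsc,e)$ by multilinearity, and note that every term containing both $p$ and $e-p$ vanishes by your mixed-term argument applied to the orthogonal pair $p$, $e-p$; the sole survivor is $\varphi(p,\dotsc,p)$. With that in hand no conjugation covariance is needed: if $a=\sum_i\lambda_i p_i$ with $\{p_i\}$ pairwise orthogonal idempotents summing to $e$ (the conjugated diagonal matrix units), then $P(a)=\sum_i\lambda_i^nP(p_i)=\sum_i\lambda_i^n\varphi(p_i,e,\dotsc,e)=\Phi(a^n)$, and density of diagonalizable matrices plus continuity finishes the existence part. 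Separately, your justification of uniqueness is wrong as stated: $E_{ij}$ with $i\neq j$ is not a difference of $n$-th powers ``because it is nilpotent'' (indeed $E_{ij}^n=0$ for $n\ge 2$); the correct witness is the idempotent $E_{ii}+E_{ij}$, giving $E_{ij}=(E_{ii}+E_{ij})^n-E_{ii}^n$. Alternatively, uniqueness follows at once by polarizing $P(a)=\Phi(a^n)$ and evaluating the resulting identity at $(a,e,\dotsc,e)$, which is precisely how the paper obtains \eqref{m0}.
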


\begin{proof}
Let $\Psi\colon\mathcal{M}\to\mathbb{M}_k$ be an isomorphism.
Since $\mathbb{M}_k$ is a $C^*$-algebra and
the map $P\circ\Psi^{-1}\colon\mathbb{M}_k\to Y$ is easily seen to be
an orthogonally additive $n$-homogeneous polynomial,~\cite[Corollary~3.1]{P}
then shows that there exists
a unique linear map $\Theta\colon\mathbb{M}_k\to Y$ such that
$P(\Psi^{-1}(M))=\Theta(M^n)$ for each $M\in\mathbb{M}_k$. It is a simple matter to check
that the map $\Phi=\Theta\circ\Psi$ satisfies~\eqref{m00}.
Now the polarization of~\eqref{m00} yields
\begin{equation*}
\varphi(a_1,\ldots,a_n)=
\frac{1}{n!}
\Phi\left(\sum_{\sigma\in\mathfrak{S}_n}a_{\sigma(1)}\cdots a_{\sigma(n)}\right)
\end{equation*}
for each $(a_1,\ldots,a_n)\in \mathcal{M}$,
whence
$\varphi(a,e,\dotsc,e)=\Phi(a)$ for each $a\in\mathcal{M}$.
\end{proof}

\begin{lemma}\label{l1525}
Let $X$ be a Banach space and let $T_1,\dotsc,T_m\in\mathcal{F}(X)$.
Then there exists a subalgebra $\mathcal{M}$ of $\mathcal{F}(X)$ such that
$T_1,\dotsc,T_m\in\mathcal{M}$ and $\mathcal{M}$ is isomorphic
to $\mathbb{M}_k$ for some $k\in\mathbb{N}$.
\end{lemma}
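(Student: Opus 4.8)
The plan is to realise $\mathcal{M}$ as a compression $Q\mathcal{B}(X)Q$ for a suitably chosen finite‑rank idempotent $Q\in\mathcal{F}(X)$ whose range contains all the ranges of the $T_i$ and whose kernel is contained in all their kernels. To set this up, write each $T_i$ in the form $T_i=\sum_{j=1}^{r_i}x_{ij}\otimes f_{ij}$ with $x_{ij}\in X$ and $f_{ij}\in X^*$, put $E_0=\operatorname{span}\{x_{ij}:i,j\}$, so that $\operatorname{ran}T_i\subseteq E_0$ for every $i$, and put $N=\bigcap_{i,j}\ker f_{ij}$, which is a closed subspace of finite codimension satisfying $N\subseteq\bigcap_i\ker T_i$. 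Adjoining to $E_0$ a finite‑dimensional algebraic complement of $N$, I obtain a finite‑dimensional subspace $E$ with $\operatorname{ran}T_i\subseteq E$ for all $i$ and $E+N=X$.

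Next I would split off $E$ by a continuous projection whose kernel lies inside $N$. Since $E\cap N$ is finite‑dimensional, hence closed in $N$, it is the range of a continuous projection of $N$; let $G$ be the kernel of that projection, so $G$ is a closed subspace of $N$ with $N=(E\cap N)\oplus G$. From $E+N=X$ one gets $E+G=X$, and from $G\subseteq N$ one gets $E\cap G\subseteq E\cap N\cap G=\{0\}$, so $X=E\oplus G$ algebraically with both summands closed. By the open mapping theorem the associated projection $Q$ of $X$ onto $E$ along $G$ is continuous. Thus $Q$ is a finite‑rank idempotent with $\operatorname{ran}Q=E\supseteq\operatorname{ran}T_i$ and $\ker Q=G\subseteq N\subseteq\ker T_i$, and consequently $QT_iQ=T_i$ for every $i$.

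Finally, set $\mathcal{M}=Q\mathcal{B}(X)Q=\{QSQ:S\in\mathcal{B}(X)\}$. Because $Q^2=Q$, this is a subalgebra of $\mathcal{B}(X)$ with identity $Q$, and every one of its elements has range inside $E$, so $\mathcal{M}\subseteq\mathcal{F}(X)$; moreover $T_1,\dots,T_m\in\mathcal{M}$ by the previous paragraph. To identify $\mathcal{M}$ with a full matrix algebra I would use the restriction map $\mathcal{M}\to\mathcal{B}(E)$, $S\mapsto S|_E$, which is well defined since $S(E)\subseteq E$ and is an algebra homomorphism; it is injective because $S=QSQ$ is determined by its action on $E=\operatorname{ran}Q$, and it is surjective because for $R\in\mathcal{B}(E)$ the operator $RQ$ lies in $\mathcal{M}$ and restricts to $R$ on $E$. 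Hence $\mathcal{M}\cong\mathcal{B}(E)\cong\mathbb{M}_k$ with $k=\dim E$, and being finite‑dimensional $\mathcal{M}$ is automatically a Banach algebra.

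The routine parts are the rank‑one decomposition of the $T_i$ and the verification of the homomorphism properties. The one step that requires care is the construction of $Q$: an arbitrary algebraic complement of the finite‑codimensional subspace $\bigcap_i\ker T_i$ need not be closed, so one cannot simply pick a complement of $E$ and hope it sits inside the kernels. This is exactly why I first enlarge $E$ so that $E+N=X$ and then obtain $G$ as the kernel of a \emph{continuous} projection of $N$ onto $E\cap N$, which keeps $G$ closed while forcing $G\subseteq N\subseteq\bigcap_i\ker T_i$.
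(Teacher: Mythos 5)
Your proof is correct. It takes a superficially different route from the paper's, but the two constructions are really two faces of the same object. The paper reduces to rank-one operators $T_j=x_j\otimes f_j$, builds a biorthogonal system $y_1,\dotsc,y_k\in X$, $g_1,\dotsc,g_k\in X^*$ with $g_i(y_j)=\delta_{ij}$ whose spans absorb all the $x_j$ and $f_j$, and takes $\mathcal{M}=\operatorname{span}\{y_i\otimes g_j\}$, exhibiting the isomorphism onto $\mathbb{M}_k$ explicitly via $T\mapsto[g_i(T(y_j))]$. You instead manufacture a continuous finite-rank idempotent $Q$ with $\operatorname{ran}Q\supseteq\operatorname{ran}T_i$ and $\ker Q\subseteq\ker T_i$ and take the corner $\mathcal{M}=Q\mathcal{B}(X)Q\cong\mathcal{B}(E)$; with $Q=\sum_i y_i\otimes g_i$ the paper's algebra is precisely this corner, so the outputs coincide. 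In both arguments the only genuinely analytic step is a Hahn--Banach/complementation argument: the paper uses it to choose the $y_i$ dual to the $g_i$ and to lift functionals from $(X/U)^*$ to $U^\perp$, while you use it to complement the finite-dimensional space $E\cap N$ inside $N$ so that the complement $G$ remains closed, stays inside the common kernel $N$, and yields a topological direct sum $X=E\oplus G$ --- and you are right that this is the step where a naive choice of algebraic complement would fail. Your packaging makes the unit of $\mathcal{M}$ and the identification with a full matrix algebra conceptually cleaner; the paper's explicit matrix units $y_i\otimes g_j$ give the isomorphism by a concrete formula. The only point either version glosses over is the degenerate case $T_1=\dotsb=T_m=0$, where $E$ could be $\{0\}$; one simply enlarges $E$ to be nonzero, so this is immaterial.
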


\begin{proof}
We can certainly assume that $T_1,\dotsc,T_m$ are rank-one operators.
Write $T_j=x_j\otimes f_j$ with $x_j\in X$ and $f_j\in X^*$ for each $j\in \left\{1,\dotsc,m\right\}$.

We claim that there exist $y_1,\dotsc,y_k\in X$ and $g_1,\dotsc g_k\in X^*$
such that
\begin{equation}\label{e1559}
\begin{split}
x_1,\dotsc,x_m\in\text{span}\left(\{y_1,\dotsc,y_k\}\right)&,\\
f_1,\dotsc,f_m\in\text{span}\left(\{g_1,\dotsc,g_k\}\right)&,
\end{split}
\end{equation}
and
\begin{equation}\label{e1558}
g_i(y_j)=\delta_{ij} \quad \left(i,j\in\{1,\dotsc,k\}\right).
\end{equation}
Let $\{g_1,\dotsc,g_l\}$ be a basis of the linear span of $\{f_1,\dotsc,f_m\}$,
and let $y_1\dotsc,y_l\in X$ be such that $g_i(y_j)=\delta_{ij}$
($i,j\in\{1,\dotsc,l\}$).
Let $U$ be the linear span of $\{y_1,\dotsc,y_l\}$ in $X$,
let $Q\colon X\to X/U$ the quotient map,
and let
\[
U^\perp=\{f\in X^*\colon f(y_1)=\dots=f(y_l)=0\}.
\]
If $\{x_1,\dotsc,x_m\}\subset\{y_1,\dotsc,y_l\}$, then our claim follows.
We now assume that $\{x_1,\dotsc,x_m\}\not\subset\{y_1,\dotsc,y_l\}$.
Let $y_{l+1},\dotsc,y_k\in X$ be such that
$\{Q(y_{l+1}),\dotsc,Q(y_k)\}$ is a basis of the linear span of the
set $\{Q(x_1),\dotsc,Q(x_m)\}$ in $X/U$. Since the map $f\mapsto f\circ Q$
defines an isometric isomorphism from $\bigl(X/U\bigr)^*$ onto $U^\perp$,
it follows that there exist $g_{l+1},\dotsc,g_k\in U^\perp$ such that
$g_i(y_j)=\delta_{ij}$ for all $i,j\in\{l+1,\dotsc,k\}$.
It is a simple matter to check that the sets $\{y_1,\dotsc,y_k\}$ and
$\{g_1,\dotsc,g_k\}$ satisfy the requirements \eqref{e1559} and \eqref{e1558}.

Let $\mathcal{M}$ be the subalgebra of $\mathcal{F}(X)$ generated by the set
$\bigl\{y_i\otimes g_j\colon i,j\in\{1,\dotsc,k\}\bigr\}$.
By \eqref{e1559}, $T_1,\dotsc,T_m\in\mathcal{M}$.
From~\eqref{e1558} we conclude that the algebra $\mathcal{M}$ is isomorphic
to the full matrix algebra $\mathbb{M}_n$. Actually, the map $T\mapsto\bigl[g_i(T(y_j))\bigr]_{i,j}$
defines an isomorphism from $\mathcal{M}$ onto $\mathbb{M}_k$ that takes
the operator $y_i\otimes g_j$ into the standard matrix unit $E_{i,j}$ ($i,j\in\{1,\dotsc,k\}$).
\end{proof}

From Lemmas~\ref{l1526} and~\ref{l1525} we see immediately that each
orthogonally additive $n$-homogeneous polynomial $P$ on $\mathcal{F}(X)$
can be represented in the standard way on any finitely generated 
subalgebra of $\mathcal{F}(X)$. The issue is whether the pieces 
$\Phi_\mathcal{M}$ (where $\mathcal{M}$ ranges over the finitely 
generated subalgebras of $\mathcal{F}(X)$) fit together to give a linear map $\Phi$ representing the polynomial $P$ on the whole $\mathcal{F}(X)$.

\begin{corollary}\label{crf}
Let $X$ and $Y$ be Banach spaces, and let $P\colon \mathcal{F}(X)\to Y$ be an orthogonally additive $n$-homogeneous polynomial.
Then there exists a unique linear map $\Phi\colon\mathcal{F}(X)\to Y$ such that
$P(T)=\Phi(T^n)$ for each $T\in\mathcal{F}(X)$.
\end{corollary}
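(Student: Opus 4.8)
The plan is to glue together the local representations given by Lemmas~\ref{l1526} and~\ref{l1525}. For each finitely generated subalgebra $\mathcal{M}$ of $\mathcal{F}(X)$ that is isomorphic to some $\mathbb{M}_k$, Lemma~\ref{l1526} provides a unique linear map $\Phi_\mathcal{M}\colon\mathcal{M}\to Y$ with $P(a)=\Phi_\mathcal{M}(a^n)$ for all $a\in\mathcal{M}$, and moreover $\Phi_\mathcal{M}(a)=\varphi(a,e_\mathcal{M},\dotsc,e_\mathcal{M})$ where $e_\mathcal{M}$ is the identity of $\mathcal{M}$ and $\varphi$ is the (global) symmetric $n$-linear map associated with $P$. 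The first key point is a \emph{consistency} check: if $\mathcal{M}\subset\mathcal{N}$ are two such subalgebras, then $\Phi_\mathcal{N}|_\mathcal{M}=\Phi_\mathcal{M}$. This need not be immediate from the formula $\varphi(a,e,\dotsc,e)$ because $e_\mathcal{M}\neq e_\mathcal{N}$ in general; so one argues instead from uniqueness together with the multiplicative structure. Concretely, writing $e=e_\mathcal{M}$, every element $a\in\mathcal{M}$ satisfies $a=eae$, so $a^n=(eae)^n$, and one shows $\Phi_\mathcal{N}(a)=\Phi_\mathcal{N}(eae)$ depends on $a$ in a way that reproduces an orthogonally additive $n$-homogeneous polynomial $a\mapsto\Phi_\mathcal{N}((eae)^n)=P(eae)$ on $\mathcal{M}$; but $e$ acts as the identity on $\mathcal{M}$, so this polynomial is just $P|_\mathcal{M}$, and by the uniqueness clause of Lemma~\ref{l1526} the induced linear map equals $\Phi_\mathcal{M}$.

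Granting consistency, define $\Phi\colon\mathcal{F}(X)\to Y$ by $\Phi(T)=\Phi_\mathcal{M}(T)$ for any admissible $\mathcal{M}$ containing $T$; Lemma~\ref{l1525} guarantees such an $\mathcal{M}$ exists, and consistency (applied to a common enlarging subalgebra, again furnished by Lemma~\ref{l1525}) makes $\Phi$ well defined. To see $\Phi$ is linear, take $S,T\in\mathcal{F}(X)$, choose by Lemma~\ref{l1525} a single admissible $\mathcal{M}$ containing both $S$ and $T$, and use linearity of $\Phi_\mathcal{M}$. To see $P(T)=\Phi(T^n)$, pick an admissible $\mathcal{M}\ni T$ and invoke~\eqref{m00}; note $T^n\in\mathcal{M}$ as well, so $\Phi(T^n)=\Phi_\mathcal{M}(T^n)=P(T)$. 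Uniqueness of $\Phi$ follows because any two linear maps with this property agree on $T^n$ for every $T\in\mathcal{F}(X)$, and since every rank-one operator $x\otimes f$ with $f(x)\neq 0$ is a scalar multiple of its own square while nilpotent rank-one operators are sums of two such (or one handles the linear span directly: $\{T^n:T\in\mathcal{F}(X)\}$ spans $\mathcal{F}(X)$, as each matrix unit $E_{i,i}$ in a copy of $\mathbb{M}_k$ is an $n$-th power and these together with commutators of the ambient algebra span everything — more simply, within $\mathbb{M}_k$ the $n$-th powers span $\mathbb{M}_k$ by Lemma~\ref{l1526} applied to $P(a)=a^n$).

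\textbf{The main obstacle} I anticipate is the consistency step $\Phi_\mathcal{N}|_\mathcal{M}=\Phi_\mathcal{M}$, precisely because the local identities $e_\mathcal{M}$ differ and the clean formula~\eqref{m0} is relative to each algebra's own identity. The resolution is to lean on the uniqueness half of Lemma~\ref{l1526} rather than on the explicit polarization formula: restrict the representation $P|_\mathcal{N}(a)=\Phi_\mathcal{N}(a^n)$ to $a\in\mathcal{M}$ and observe that $a\mapsto\Phi_\mathcal{N}(a^n)$, with domain $\mathcal{M}$, is an orthogonally additive $n$-homogeneous polynomial on $\mathcal{M}\cong\mathbb{M}_k$ represented by the linear map $\Phi_\mathcal{N}|_\mathcal{M}$; uniqueness in Lemma~\ref{l1526} forces $\Phi_\mathcal{N}|_\mathcal{M}=\Phi_\mathcal{M}$. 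Everything else is bookkeeping, facilitated at each stage by Lemma~\ref{l1525}, which lets us place any finite set of operators — and in particular any finite set of the relevant elements — inside a single admissible matrix subalgebra.
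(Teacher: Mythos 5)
Your proposal is correct and follows essentially the same route as the paper: glue the local representations $\Phi_\mathcal{M}$ by placing any finite set of operators in a common matrix subalgebra via Lemma~\ref{l1525} and invoking the uniqueness clause of Lemma~\ref{l1526} for consistency of the restrictions. Your uniqueness argument (that the $n$-th powers span $\mathcal{F}(X)$) is a harmless rephrasing of the paper's, which uses polarization together with a local identity $S$ satisfying $TS=ST=T$ to get $\Phi(T)=\varphi(T,S,\dotsc,S)=\Psi(T)$.
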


\begin{proof}
Let $T\in\mathcal{F}(X)$. Lemma~\ref{l1525} shows that
there exists a subalgebra $\mathcal{M}$ of $\mathcal{F}(X)$ such that
$T\in\mathcal{M}$ and $\mathcal{M}$ is isomorphic to $\mathbb{M}_k$ for some $k\in\mathbb{N}$.
Then Lemma~\ref{l1526} yields a unique linear map $\Phi_\mathcal{M}\colon\mathcal{M}\to Y$ such that
$P(S)=\Phi_{\mathcal{M}}(S^n)$ for each $S\in\mathcal{M}$.
Then we set $\Phi(T)=\Phi_\mathcal{M}(T)$. We now show that $\Phi$ is well-defined.
Assume that $\mathcal{M}_1$ and $\mathcal{M}_2$ are subalgebras of $\mathcal{F}(X)$ with the properties that
$T\in\mathcal{M}_j$ and $\mathcal{M}_j$ is isomorphic to a full matrix algebra ($j=1,2$).
Then, according to Lemma~\ref{l1525}, there exists
a subalgebra $\mathcal{N}$ of $\mathcal{F}(X)$ such that
$\mathcal{M}_1,\mathcal{M}_2\subset\mathcal{N}$ and $\mathcal{N}$ is isomorphic to a full matrix algebra.
The uniqueness of the represention asserted in Lemma~\ref{l1526} gives that $\Phi_\mathcal{N}$ equals $\Phi_{\mathcal{M}_j}$
when restricted to $\mathcal{M}_j$ for $j=1,2$. Accordingly, we have
$\Phi_{\mathcal{M}_1}(T)=\Phi_\mathcal{N}(T)=\Phi_{\mathcal{M}_2}(T)$.

Let $S$, $T\in\mathcal{F}(X)$ and let $\alpha,\beta\in\mathbb{C}$.
By Lemma~\ref{l1525}
there exists a subalgebra $\mathcal{M}$ of $\mathcal{F}(X)$ such that
$S,T\in\mathcal{M}$ and $\mathcal{M}$ is isomorphic to a full matrix algebra.
Then
\[
\Phi(\alpha S+\beta T)=
\Phi_\mathcal{M}(\alpha S+\beta T)=
\alpha\Phi_\mathcal{M}(S)+\beta\Phi_\mathcal{M}(T)=
\alpha\Phi(S)+\beta\Phi(T),
\]
and, since $T^n\in\mathcal{M}$,
\[
P(T)=\Phi_\mathcal{M}(T^n)=\Phi(T^n).
\]
This shows that $\Phi$ is linear and gives a representation of $P$.
It should be pointed out that the polarization of this representations
yields
\begin{equation}\label{m1}
\varphi(T_1,\ldots,T_n)=
\frac{1}{n!}
\Phi\left(\sum_{\sigma\in\mathfrak{S}_n}T_{\sigma(1)}\cdots T_{\sigma(n)}\right)
\end{equation}
for each $(T_1,\ldots,T_n)\in \mathcal{F}(X)$,
where $\varphi$ is the symmetric $n$-linear map associated with $P$.

Our final task is to prove the uniqueness of the map $\Phi$.
Suppose that $\Psi\colon\mathcal{F}(X)\to Y$ is a linear map
such that $P(T)=\Psi(T^n)$ for each $T\in\mathcal{F}(X)$.
The polarization of this identity gives
\begin{equation}\label{m2}
\varphi(T_1,\ldots,T_n)=
\frac{1}{n!}
\Psi\left(\sum_{\sigma\in\mathfrak{S}_n}T_{\sigma(1)}\cdots T_{\sigma(n)}\right)
\end{equation}
for each $(T_1,\ldots,T_n)\in \mathcal{F}(X)$.
Let $T\in\mathcal{F}(X)$. On account of Lemma~\ref{l1525}, there exists
$S\in\mathcal{F}(X)$ such that $TS=ST=T$. From \eqref{m1} and \eqref{m2}
we obtain $\Phi(T)=\varphi(T,S,\dotsc,S)=\Psi(T)$.
\end{proof}

It is not clear at all whether or not the linear map $\Phi$ given in the preceding result is continuous
in the case where the polynomial $P$ is continuous.

\section{Orthogonally additive polynomials on the algebra of approximable operators}

Let $A$ be a Banach algebra,
let $(e_\lambda)_{\lambda\in\Lambda}$ be a bounded approximate identity for $A$ of bound $C$, and
let $\mathcal{U}$ be an ultrafilter on $\Lambda$ containing the order filter on $\Lambda$
(which will be associated with $(e_\lambda)_{\lambda\in\Lambda}$ and fixed throughout).
Let $Y$ be a dual Banach space and let $Y_*$ be a predual of $Y$.
It follows from the Banach-Alaoglu theorem that each bounded subset of $Y$
is relatively compact with respect to the $\sigma(Y,Y_*)$-topology on $Y$.
Consequently, each bounded net $(y_\lambda)_{\lambda\in\Lambda}$ in $Y$
has a unique limit with respect to the $\sigma(Y,Y_*)$-topology
along the ultrafilter $\mathcal{U}$, and we write $\lim_{\mathcal{U}} y_\lambda$ for this limit.

Let $\varphi\colon A^n\to Y$ be a continuous $n$-linear map.
For each $a_1,\ldots,a_{n-1}\in A$ and $\lambda\in\Lambda$, we have
\begin{equation}\label{e1908}
\begin{split}
\Vert\varphi(a_1,\ldots,a_{n-1},e_\lambda)\Vert
& \le
\Vert\varphi\Vert\Vert a_1\Vert\cdots\Vert a_{n-1}\Vert\Vert e_\lambda\Vert\\
&\le
C\Vert\varphi\Vert\Vert a_1\Vert\cdots\Vert a_{n-1}\Vert .
\end{split}
\end{equation}
Hence the net $(\varphi(a_1,\ldots,a_{n-1},e_\lambda))_{\lambda\in\Lambda}$ is bounded
and therefore we can define the map $\varphi'\colon A^{n-1}\to Y$ by
\[
\varphi'(a_1,\ldots,a_{n-1})=\lim_{\mathcal{U}}\varphi(a_1,\ldots,a_{n-1},e_\lambda)
\]
for each $(a_1,\ldots,a_{n-1})\in A^{n-1}$.
The linearity of the limit along an ultrafilter on a topological linear space gives
the $(n-1)$-linearity of $\varphi'$. Moreover, from~\eqref{e1908} we deduce that
\[
\Vert\varphi'(a_1,\ldots,a_{n-1})\Vert\le C\Vert\varphi\Vert\Vert a_1\Vert\cdots\Vert a_{n-1}\Vert
\]
for each $(a_1,\ldots,a_{n-1})\in A^{n-1}$, which gives the continuity of $\varphi'$ and
$\Vert\varphi'\Vert\le C\Vert\varphi\Vert$. Further,
it is clear that if the map $\varphi$ is symmetric, then the map $\varphi'$ is symmetric.

\begin{lemma}\label{l177}
Let $A$ be a Banach algebra with a bounded approximate identity $(e_\lambda)_{\lambda\in\Lambda}$,
let $Y$ be a dual Banach space, and let $\varphi\colon A^n\to Y$ be
a continuous symmetric $n$-linear map with $n\ge 2$.
Suppose that
\[
\varphi(a_1,\ldots,a_n)=
\frac{1}{n!}
\sum_{\sigma\in\mathfrak{S}_n}\varphi'(a_{\sigma(1)},\ldots,a_{\sigma(n-1)}a_{\sigma(n)})
\]
for each $(a_1,\dotsc,a_n)\in A^n$.
Then there exists a continuous linear map $\Phi\colon A\to Y$ such that
\[
\varphi(a_1,\ldots,a_n)=
\frac{1}{n!}
\Phi\left(\sum_{\sigma\in\mathfrak{S}_n}a_{\sigma(1)}\cdots a_{\sigma(n)}\right)
\]
for each $(a_1,\ldots,a_n)\in A^n$.
\end{lemma}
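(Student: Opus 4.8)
The plan is to argue by induction on $n$.

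For $n=2$ the hypothesis reads $\varphi(a_1,a_2)=\frac{1}{2}\bigl(\varphi'(a_1a_2)+\varphi'(a_2a_1)\bigr)$, and since $\varphi'\colon A\to Y$ is already a continuous linear map one takes simply $\Phi=\varphi'$, so that $\varphi(a_1,a_2)=\frac{1}{2}\Phi(a_1a_2+a_2a_1)$. Now let $n\ge 3$ and assume the statement for $n-1$. Let $\varphi''=(\varphi')'$ be the continuous symmetric $(n-2)$-linear map obtained from $\varphi'$ by the construction preceding the lemma, using the same bounded approximate identity and the same ultrafilter $\mathcal{U}$. The crucial step is to check that $\varphi'$ satisfies the hypothesis of the lemma with $n$ replaced by $n-1$, namely
\[
\varphi'(b_1,\ldots,b_{n-1})=\frac{1}{(n-1)!}\sum_{\tau\in\mathfrak{S}_{n-1}}\varphi''(b_{\tau(1)},\ldots,b_{\tau(n-2)}b_{\tau(n-1)})\qquad(b_1,\ldots,b_{n-1}\in A).
\]

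To establish this, I would apply the hypothesis on $\varphi$ to the $n$-tuple $(b_1,\ldots,b_{n-1},e_\lambda)$ and pass to the limit along $\mathcal{U}$. The left-hand side tends to $\varphi'(b_1,\ldots,b_{n-1})$ by definition. On the right-hand side, split the $n!$ permutations $\sigma$ according to the position of the index $n$. For the $2(n-1)!$ permutations with $\sigma^{-1}(n)\in\{n-1,n\}$ the summand is $\varphi'(b_{\sigma(1)},\ldots,b_{\sigma(n-2)},e_\lambda b_{\sigma(n)})$ or $\varphi'(b_{\sigma(1)},\ldots,b_{\sigma(n-2)},b_{\sigma(n-1)}e_\lambda)$; since $\Vert e_\lambda b-b\Vert\to 0$ and $\Vert be_\lambda-b\Vert\to 0$ and $\varphi'$ is continuous, each of these converges in norm, hence in the $\sigma(Y,Y_*)$-topology, and by symmetry of $\varphi'$ its limit equals $\varphi'(b_1,\ldots,b_{n-1})$. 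For the remaining $(n-2)(n-1)!$ permutations the index $n$ occupies one of the first $n-2$ slots, so the summand is $\varphi'$ with $e_\lambda$ in that slot, the other slots filled by single $b$'s and by a product $b_pb_q$; moving $e_\lambda$ to the last slot by symmetry, the definition of $\varphi''$ shows these converge along $\mathcal{U}$ to the corresponding values of $\varphi''$. Using that for each ordered pair $(p,q)$ with $p\ne q$ exactly $(n-2)!$ of these permutations give the same $\varphi''$-term, one gets
\[
\varphi'(b_1,\ldots,b_{n-1})=\frac{1}{n!}\Bigl(2(n-1)!\,\varphi'(b_1,\ldots,b_{n-1})+(n-2)!\sum_{\substack{p,q\in\{1,\ldots,n-1\}\\ p\ne q}}\varphi''(\ldots,b_pb_q)\Bigr).
\]
Since $1-2(n-1)!/n!=(n-2)/n$, solving for $\varphi'(b_1,\ldots,b_{n-1})$ gives $\varphi'(b_1,\ldots,b_{n-1})=\frac{1}{(n-1)(n-2)}\sum_{p\ne q}\varphi''(\ldots,b_pb_q)$, and the count that each $(p,q)$ arises from exactly $(n-3)!$ of the $\tau\in\mathfrak{S}_{n-1}$ identifies this with the displayed identity.

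Granting this, the induction hypothesis applied to $\varphi'$ yields a continuous linear map $\Phi\colon A\to Y$ with $\varphi'(b_1,\ldots,b_{n-1})=\frac{1}{(n-1)!}\Phi\bigl(\sum_{\tau\in\mathfrak{S}_{n-1}}b_{\tau(1)}\cdots b_{\tau(n-1)}\bigr)$. Substituting this, with $(b_1,\ldots,b_{n-1})=(a_{\sigma(1)},\ldots,a_{\sigma(n-2)},a_{\sigma(n-1)}a_{\sigma(n)})$, into the hypothesis on $\varphi$ and using linearity of $\Phi$ gives $\varphi(a_1,\ldots,a_n)=\frac{1}{n!\,(n-1)!}\Phi(W)$, where $W$ is the sum over all $\sigma\in\mathfrak{S}_n$ and all $\tau\in\mathfrak{S}_{n-1}$ of the length-$n$ products obtained by arranging, in the order prescribed by $\tau$, the $n-1$ blocks $a_{\sigma(1)},\ldots,a_{\sigma(n-2)},a_{\sigma(n-1)}a_{\sigma(n)}$. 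Every such product is $a_{\rho(1)}\cdots a_{\rho(n)}$ for some $\rho\in\mathfrak{S}_n$, and conversely each such product is produced exactly $(n-1)!$ times: one chooses which of its $n-1$ adjacent pairs of factors is the double block $a_{\sigma(n-1)}a_{\sigma(n)}$ (fixing $\sigma(n-1),\sigma(n)$), and for each of the $(n-2)!$ permutations $\tau$ placing that block into the matching slot the remaining values of $\sigma$ are forced. Hence $W=(n-1)!\sum_{\rho\in\mathfrak{S}_n}a_{\rho(1)}\cdots a_{\rho(n)}$, so $\varphi(a_1,\ldots,a_n)=\frac{1}{n!}\Phi\bigl(\sum_{\rho\in\mathfrak{S}_n}a_{\rho(1)}\cdots a_{\rho(n)}\bigr)$, completing the induction.

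The main obstacle is the bookkeeping in the crucial step: partitioning the $n!$ permutations correctly, recognizing that the two boundary families collapse — by the approximate identity together with the symmetry of $\varphi'$ — to multiples of $\varphi'(b_1,\ldots,b_{n-1})$ itself, while the interior family contributes the $\varphi''$-terms, and then executing the two permutation counts (and the analogous one in the final substitution). Everything else — symmetry and continuity of $\varphi'$ and $\varphi''$, interchanging the finite sum with $\lim_{\mathcal{U}}$, and linearity of $\Phi$ — is routine.
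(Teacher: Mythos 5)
Your proof is correct and follows essentially the same route as the paper's: an induction whose key step shows, by evaluating the hypothesis at $(b_1,\dotsc,b_{n-1},e_\lambda)$ and taking limits along $\mathcal{U}$, that $\varphi'$ inherits the hypothesis in degree $n-1$, after which one substitutes the inductive representation of $\varphi'$ and recounts permutations. The only differences are cosmetic --- you induct upward from $n-1$ rather than from $n$ to $n+1$, and you organize the counts by ordered pairs $(p,q)$ rather than by the position of the inserted factor --- and all of your coefficients ($2(n-1)!$, $(n-2)!$, $(n-3)!$, $(n-1)!$) check out.
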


\begin{proof}
The proof is by induction on $n$.
The result is certainly true if $n=2$.
Assume that the result is true for $n$, and let
$\varphi\colon A^{n+1}\to Y$ be a continuous symmetric $(n+1)$-linear map such that
\[
\varphi(a_1,\ldots,a_n,a_{n+1})=
\frac{1}{(n+1)!}
\sum_{\sigma\in\mathfrak{S}_{n+1}}\varphi'(a_{\sigma(1)},\ldots,a_{\sigma(n-1)},a_{\sigma(n)}a_{\sigma(n+1)})
\]
for each $(a_1,\dotsc,a_{n+1})\in A^{n+1}$.
We claim that
\begin{equation}\label{1112}
\varphi'(a_1,\ldots,a_n)=
\frac{1}{n!}
\sum_{\sigma\in\mathfrak{S}_n}\varphi''(a_{\sigma(1)},\ldots,a_{\sigma(n-1)}a_{\sigma(n)})
\end{equation}
for each $(a_1,\dotsc,a_{n})\in A^n$.
Here $\varphi''$ stands for the $(n-1)$-linear map $(\varphi')'$. Indeed,
for all $(a_1,\dotsc,a_n)\in A^n$ and $\lambda\in\Lambda$, we have
\begin{align*}
\varphi(a_1,\ldots,a_n,e_\lambda)
& =
\frac{1}{(n+1)!}\sum_{\tau\in\mathfrak{S}_{n}}\varphi'(a_{\tau(1)},\ldots,a_{\tau(n-1)},a_{\tau(n)}e_\lambda)\\
& \quad {}+ \frac{1}{(n+1)!}\sum_{\tau\in\mathfrak{S}_{n}}\varphi'(a_{\tau(1)},\ldots,a_{\tau(n-1)},e_\lambda a_{\tau(n)})\\
& \quad {}+
\frac{1}{(n+1)!}\sum_{\tau\in\mathfrak{S}_{n}}\varphi'(a_{\tau(1)},\ldots,e_\lambda ,a_{\tau(n-1)}a_{\tau(n)})\\
&\quad {}+\dotsb+
\frac{1}{(n+1)!}\sum_{\tau\in\mathfrak{S}_{n}}\varphi'(e_\lambda,a_{\tau(1)},\ldots ,a_{\tau(n-1)}a_{\tau(n)}).\\
\end{align*}
Since $(e_\lambda)_{\lambda\in\Lambda}$ is a bounded approximate identity for $A$,
the nets $(a_ke_\lambda)_{\lambda\in\Lambda}$ and $(e_\lambda a_k)_{\lambda\in\Lambda}$
converge to $a_k$ in norm for each $k\in\{1,\dotsc,n\}$, and so, taking limits along $\mathcal{U}$
on both sides of the above equation
(and using the continuity of $\varphi'$), we see that
\begin{align*}
\varphi'(a_1,\ldots,a_n)
& =
\lim_\mathcal{U}
\varphi(a_1,\ldots,a_n,e_\lambda)\\
& =
\frac{1}{(n+1)!}\sum_{\tau\in\mathfrak{S}_{n}}\lim_\mathcal{U}\varphi'(a_{\tau(1)},\ldots,a_{\tau(n-1)},a_{\tau(n)}e_\lambda)\\
& \quad {}+
\frac{1}{(n+1)!}\sum_{\tau\in\mathfrak{S}_{n}}\lim_\mathcal{U}\varphi'(a_{\tau(1)},\ldots,a_{\tau(n-1)},e_\lambda a_{\tau(n)})\\
& \quad {}+
\frac{1}{(n+1)!}\sum_{\tau\in\mathfrak{S}_{n}}\lim_\mathcal{U}\varphi'(a_{\tau(1)},\ldots,e_\lambda ,a_{\tau(n-1)}a_{\tau(n)})\\
& \quad {}+\dotsb +
\frac{1}{(n+1)!}\sum_{\tau\in\mathfrak{S}_{n}}\lim_\mathcal{U}\varphi'(e_\lambda,a_{\tau(1)},\ldots ,a_{\tau(n-1)}a_{\tau(n)})\\
& =
\frac{1}{(n+1)!}\sum_{\tau\in\mathfrak{S}_{n}}\varphi'(a_{\tau(1)},\ldots,a_{\tau(n-1)},a_{\tau(n)})\\
& \quad {}+
\frac{1}{(n+1)!}\sum_{\tau\in\mathfrak{S}_{n}}\varphi'(a_{\tau(1)},\ldots,a_{\tau(n-1)},a_{\tau(n)})\\
& \quad {}+
\frac{1}{(n+1)!}\sum_{\tau\in\mathfrak{S}_{n}}\varphi''(a_{\tau(1)},\ldots ,a_{\tau(n-1)}a_{\tau(n)})\\
& \quad {}+\dotsb+ 
\frac{1}{(n+1)!}\sum_{\tau\in\mathfrak{S}_{n}}\varphi''(a_{\tau(1)},\ldots ,a_{\tau(n-1)}a_{\tau(n)})\\
& =
\frac{1}{(n+1)!} 2n!\,\varphi'(a_1,\ldots,a_{n-1},a_n)\\
& \quad {}+
\frac{1}{(n+1)!}(n-1)\sum_{\tau\in\mathfrak{S}_{n}}\varphi''(a_{\tau(1)},\ldots ,a_{\tau(n-1)}a_{\tau(n)}).\\
\end{align*}
We thus get
\[
\Bigl(1-\frac{2}{n+1}\Bigr)\varphi'(a_1,\ldots,a_n)=
\frac{n-1}{(n+1)!}\sum_{\tau\in\mathfrak{S}_{n}}\varphi''(a_{\tau(1)},\ldots ,a_{\tau(n-1)}a_{\tau(n)}),
\]
which proves our claim.

By~\eqref{1112} and the inductive hypothesis,
there exists a continuous linear map $\Phi\colon A\to Y$ such that
\[
\varphi'(a_1,\ldots,a_n)=
\frac{1}{n!}\Phi\left(
\sum_{\tau\in\mathfrak{S}_n}a_{\tau(1)}\cdots a_{\tau(n)}\right)
\]
and thus
\begin{align*}
\varphi'(a_1,\ldots,a_n) & = 
\frac{1}{n!} \sum_{\tau\in\mathfrak{S}_{n-1}}\Phi(a_{\tau(1)}\cdots a_{\tau(n-1)}a_n)+
\frac{1}{n!}\sum_{\tau\in\mathfrak{S}_{n-1}}\Phi(a_{\tau(1)}\cdots a_na_{\tau(n-1)})\\
& \quad  {}+\cdots +
\frac{1}{n!}\sum_{\tau\in\mathfrak{S}_{n-1}}\Phi\left(a_na_{\tau(1)}\cdots a_{\tau(n-1)}\right)
\end{align*}
for each $(a_1,\ldots,a_n)\in A^n$.
Therefore, for each $(a_1,\ldots,a_n,a_{n+1})\in A^{n+1}$, we have
\begin{multline*}
\varphi(a_1,\ldots,a_n,a_{n+1})
=
\frac{1}{(n+1)!}\sum_{\sigma\in\mathfrak{S}_{n+1}}
\varphi'(a_{\sigma(1)},\ldots,a_{\sigma(n-1)},a_{\sigma(n)}a_{\sigma(n+1)})\\
=
\frac{1}{(n+1)!}\sum_{\sigma\in\mathfrak{S}_{n+1}}
\frac{1}{n!}\sum_{\tau\in\mathfrak{S}_{n-1}}
\Phi(a_{\sigma(\tau(1))}\cdots a_{\sigma(\tau(n-1))}a_{\sigma(n)}a_{\sigma(n+1)})\\
+
\frac{1}{(n+1)!}\sum_{\sigma\in\mathfrak{S}_{n+1}}
\frac{1}{n!}\sum_{\tau\in\mathfrak{S}_{n-1}}
\Phi(a_{\sigma(\tau(1))}\cdots a_{\sigma(n)}a_{\sigma(n+1)}a_{\sigma(\tau(n-1))})\\
\qquad {}+\cdots+
\frac{1}{(n+1)!}\sum_{\sigma\in\mathfrak{S}_{n+1}}
\frac{1}{n!}\sum_{\tau\in\mathfrak{S}_{n-1}}
\Phi(a_{\sigma(n)}a_{\sigma(n+1)}a_{\sigma(\tau(1))}\ldots a_{\sigma(\tau(n-1))}) \\
=
\frac{1}{(n+1)!}\frac{1}{n!}
\sum_{\tau\in\mathfrak{S}_{n-1}}\sum_{\sigma\in\mathfrak{S}_{n+1}}
\Phi(a_{\sigma(\tau(1))}\cdots a_{\sigma(\tau(n-1))}a_{\sigma(n)}a_{\sigma(n+1)})\\
{} +
\frac{1}{(n+1)!}\frac{1}{n!}
\sum_{\tau\in\mathfrak{S}_{n-1}}\sum_{\sigma\in\mathfrak{S}_{n+1}}
\Phi(a_{\sigma(\tau(1))}\cdots a_{\sigma(n)}a_{\sigma(n+1)}a_{\sigma(\tau(n-1))})\\
\qquad {}+\cdots+
\frac{1}{(n+1)!}\frac{1}{n!}
\sum_{\tau\in\mathfrak{S}_{n-1}}\sum_{\sigma\in\mathfrak{S}_{n+1}}
\Phi(a_{\sigma(n)}a_{\sigma(n+1)}a_{\sigma(\tau(1))}\cdots a_{\sigma(\tau(n-1))})\\
=
\frac{1}{(n+1)!}\frac{1}{n!}
\sum_{\tau\in\mathfrak{S}_{n-1}}\sum_{\sigma\in\mathfrak{S}_{n+1}}
\Phi(a_{\sigma(1)}\cdots a_{\sigma(n-1)}a_{\sigma(n)}a_{\sigma(n+1)}) \\
{}+
\frac{1}{(n+1)!}\frac{1}{n!}
\sum_{\tau\in\mathfrak{S}_{n-1}}\sum_{\sigma\in\mathfrak{S}_{n+1}}
\Phi(a_{\sigma(1)}\cdots a_{\sigma(n-1)}a_{\sigma(n)}a_{\sigma(n+1)}) \\
\qquad {}+\cdots+
\frac{1}{(n+1)!}\frac{1}{n!}
\sum_{\tau\in\mathfrak{S}_{n-1}}\sum_{\sigma\in\mathfrak{S}_{n+1}}
\Phi(a_{\sigma(1)}\cdots a_{\sigma(n-1)}a_{\sigma(n)}a_{\sigma(n+1)})\\
=
\frac{1}{(n+1)!}\sum_{\sigma\in\mathfrak{S}_{n+1}}
\Phi(a_{\sigma(1)}\cdots a_{\sigma(n-1)}a_{\sigma(n)}a_{\sigma(n+1)}),
\end{multline*}
and the induction continues.
\end{proof}

\begin{lemma}\label{rfc3}
Let $\xi_1,\dotsc,\xi_n,\zeta$ be noncommuting indeterminates and let $\pi_n$ be the polynomial
defined by
\begin{equation*}
\pi_n(\xi_1,\dotsc,\xi_n)=
\sum_{\sigma\in \mathfrak{S}_n}\xi_{\sigma(1)}\dotsm\xi_{\sigma(n)}.
\end{equation*}
Then the following identities hold:
\begin{multline}\label{l2}
\sum_{\sigma\in \mathfrak{S}_n}\pi_n(\xi_{\sigma(1)},\dotsc,\xi_{\sigma(n)}\zeta)
= (n-1)!\sum_{\sigma\in \mathfrak{S}_n}
\Bigl[
\xi_{\sigma(1)}\dotsm \xi_{\sigma(n-1)}\xi_{\sigma(n)}\zeta
\\ {}+
\xi_{\sigma(1)}\dotsm \xi_{\sigma(n-1)}\zeta\xi_{\sigma(n)}
+ \dotsb +
\xi_{\sigma(1)}\zeta \dotsm \xi_{\sigma(n-1)}\xi_{\sigma(n)}
\Bigr],
\end{multline}
\begin{multline}\label{l23}
\sum_{\sigma\in \mathfrak{S}_n}\pi_n(\xi_{\sigma(1)},\dotsc,\zeta\xi_{\sigma(n)})
= (n-1)!\sum_{\sigma\in \mathfrak{S}_n}
\Bigl[
\xi_{\sigma(1)}\dotsm \xi_{\sigma(n-1)}\zeta \xi_{\sigma(n)}
\\ {} +
\xi_{\sigma(1)}\dotsm \zeta\xi_{\sigma(n-1)} \xi_{\sigma(n)}
+
\cdots {}+ \zeta \xi_{\sigma(1)} \dotsm \xi_{\sigma(n-1)}\xi_{\sigma(n)}
\Bigr],
\end{multline}
and
\begin{multline}\label{l4}
\sum_{\sigma\in \mathfrak{S}_n}\pi_n(\xi_{\sigma(1)},\dotsc,\xi_{\sigma(n-1)}\xi_{\sigma(n)},\zeta) =
(n-1)!\sum_{\sigma\in \mathfrak{S}_n}
\Bigl[
\xi_{\sigma(1)}\dotsm \xi_{\sigma(n-1)}\xi_{\sigma(n)}\zeta
\\ +
\zeta \xi_{\sigma(1)} \dotsm \xi_{\sigma(n-1)}\xi_{\sigma(n)}
\Bigr]\\
+(n-2)(n-2)!\sum_{\sigma\in \mathfrak{S}_n}
\Bigl[
\xi_{\sigma(1)}\dotsm \xi_{\sigma(n-1)}\zeta \xi_{\sigma(n)}
+\dotsb+
\xi_{\sigma(1)}\zeta \dotsm \xi_{\sigma(n-1)}\xi_{\sigma(n)}
\Bigr].
\end{multline}
\end{lemma}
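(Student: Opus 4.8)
The plan is to prove the three identities uniformly, by expanding both sides in the free associative algebra on the indeterminates $\xi_1,\dotsc,\xi_n,\zeta$ and comparing, monomial by monomial, the coefficients that occur. The first observation is that every monomial appearing on either side of \eqref{l2}, \eqref{l23} or \eqref{l4} is a word in which each $\xi_i$ occurs exactly once and $\zeta$ occurs exactly once; such a word is completely determined by a permutation $\rho\in\mathfrak{S}_n$, recording the left-to-right order of the letters $\xi_{\rho(1)},\dotsc,\xi_{\rho(n)}$, together with an index $k\in\{0,1,\dotsc,n\}$, recording that $\zeta$ sits immediately after $\xi_{\rho(k)}$ (with $k=0$ meaning that $\zeta$ comes first). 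Write $W_{\rho,k}$ for this word. Each identity then reduces to checking that $W_{\rho,k}$ occurs with the same multiplicity on both sides, for every $\rho$ and $k$.

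For \eqref{l2}, unfolding the definition of $\pi_n$ shows that $\pi_n(\xi_{\sigma(1)},\dotsc,\xi_{\sigma(n-1)},\xi_{\sigma(n)}\zeta)$ is the sum over $\tau\in\mathfrak{S}_n$ of the word whose underlying order of the $\xi$'s is $\sigma\circ\tau$ and in which $\zeta$ has been inserted immediately after the letter $\xi_{\sigma(n)}$, that is, after the letter in position $\tau^{-1}(n)$. I would then reindex by $\rho=\sigma\circ\tau$: the assignment $(\sigma,\tau)\mapsto(\rho,\tau)$ is a bijection of $\mathfrak{S}_n\times\mathfrak{S}_n$, and for each fixed $\rho$, as $\tau$ runs over $\mathfrak{S}_n$ the index $k:=\tau^{-1}(n)$ takes each value in $\{1,\dotsc,n\}$ exactly $(n-1)!$ times. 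Hence the left-hand side of \eqref{l2} equals $(n-1)!\sum_{\rho\in\mathfrak{S}_n}\sum_{k=1}^{n}W_{\rho,k}$, which is precisely its right-hand side. The identity \eqref{l23} is treated the same way, the only difference being that the special factor $\zeta\xi_{\sigma(n)}$ places $\zeta$ immediately before $\xi_{\rho(k)}$ rather than after it, so that one obtains $(n-1)!\sum_{\rho\in\mathfrak{S}_n}\sum_{k=0}^{n-1}W_{\rho,k}$.

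The substantive case is \eqref{l4}, and its multiplicity count is the step I expect to demand the most care. Here $\pi_n$ is evaluated at the $n-2$ letters $\xi_{\sigma(1)},\dotsc,\xi_{\sigma(n-2)}$, at the block $\xi_{\sigma(n-1)}\xi_{\sigma(n)}$, and at $\zeta$; upon expansion, a monomial on the left-hand side is a word of the type above subject to the extra constraint that $\xi_{\sigma(n-1)}$ and $\xi_{\sigma(n)}$ occupy adjacent positions, in that order and with no $\zeta$ between them, since they issue from a single slot of $\pi_n$. I would fix a target word $W_{\rho,k}$ and count the pairs $(\sigma,\tau)$ producing it. One must first choose which adjacent pair of $\xi$-letters of $W_{\rho,k}$ is to serve as the block; writing that pair at $\xi$-positions $p,p+1$ requires $1\le p\le n-1$ and $p\ne k$ (otherwise $\zeta$ would separate the two letters), which leaves $n-1$ admissible values of $p$ when $k\in\{0,n\}$ and $n-2$ admissible values when $1\le k\le n-1$. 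For each admissible $p$ the values $\sigma(n-1)$ and $\sigma(n)$ are forced, the remaining $n-2$ values of $\sigma$ can be distributed among the single-letter slots in $(n-2)!$ ways, and $\tau$ is then uniquely determined. Consequently $W_{\rho,k}$ occurs with multiplicity $(n-1)!$ when $k\in\{0,n\}$ and $(n-2)(n-2)!$ when $1\le k\le n-1$, which is exactly the coefficient pattern in the right-hand side of \eqref{l4}; in particular the internal term correctly vanishes when $n=2$, since then $(n-2)(n-2)!=0$.
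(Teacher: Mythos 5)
Your proof is correct, and its combinatorial content coincides with that of the paper's proof: both arguments expand $\pi_n$, observe that every monomial produced is a word containing each $\xi_i$ exactly once and $\zeta$ exactly once (subject, in \eqref{l4}, to the extra constraint that the two letters issuing from the block $\xi_{\sigma(n-1)}\xi_{\sigma(n)}$ remain adjacent in that order), and determine the multiplicity with which each such word occurs. The difference is purely in the bookkeeping. The paper first decomposes $\pi_n(\xi_1,\dotsc,\xi_n)$ according to the position of its last argument (and, for \eqref{l4}, of its last two arguments), substitutes, and then regroups the resulting double sums over $\mathfrak{S}_{n-1}\times\mathfrak{S}_n$ (respectively $\mathfrak{S}_{n-2}\times\mathfrak{S}_n$) term by term; for \eqref{l4} this becomes a very long explicit display. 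You instead fix the target word $W_{\rho,k}$ and count its preimages under the expansion, which compresses the hard case \eqref{l4} into the single observation that the block may occupy any adjacent pair of $\xi$-positions $p$ with $1\le p\le n-1$ and $p\ne k$, yielding $(n-1)(n-2)!=(n-1)!$ when $\zeta$ sits at an end ($k\in\{0,n\}$) and $(n-2)(n-2)!$ otherwise. Your reindexing $(\sigma,\tau)\mapsto(\rho,k)$ for \eqref{l2} and \eqref{l23}, the fibre count $(n-1)!$ for $\tau^{-1}(n)=k$, and the observation that for \eqref{l4} the data $(p,\sigma|_{\{1,\dotsc,n-2\}})$ determine $\sigma$ and $\tau$ uniquely are all justified, and your sanity check that the internal term vanishes at $n=2$ confirms the coefficient pattern; so nothing is missing, and your packaging of the count is the shorter and less error-prone of the two.
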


\begin{proof}
It is clear that
\begin{equation}\label{r14}
\pi_n(\xi_1,\dotsc,\xi_n)
=
\sum_{\tau\in \mathfrak{S}_{n-1}}
\Bigl[
\xi_{\tau(1)}\dotsm \xi_{\tau(n-1)}\xi_n
+\dotsb+
\xi_n\xi_{\tau(1)}\dotsm \xi_{\tau(n-1)}
\Bigr].
\end{equation}
Therefore, for each $\sigma\in \mathfrak{S}_n$, we have
\begin{align*}
\pi_n(\xi_{\sigma(1)},\dotsc,\xi_{\sigma(n)}\zeta)
& =
\sum_{\tau\in \mathfrak{S}_{n-1}}
\Bigl[
\xi_{\sigma(\tau(1))}\dotsm \xi_{\sigma(\tau(n-1))}\xi_{\sigma(n)}\zeta
\\ 
& \qquad {}+\cdots+
\xi_{\sigma(n)}\zeta\xi_{\sigma(\tau(1))}\dotsm \xi_{\sigma(\tau(n-1))}
\Bigr].
\end{align*}
We thus get
\begin{multline*}
\sum_{\sigma\in \mathfrak{S}_n}
\pi_n(\xi_{\sigma(1)},\dotsc,\xi_{\sigma(n)}\zeta) \\*
=
\sum_{\tau\in \mathfrak{S}_{n-1}}\sum_{\sigma\in \mathfrak{S}_n}
\Bigl[
\xi_{\sigma(\tau(1))}\dotsm \xi_{\sigma(\tau(n-1))}\xi_{\sigma(n)}\zeta
+\cdots+
\xi_{\sigma(n)}\zeta\xi_{\sigma(\tau(1))}\dotsm \xi_{\sigma(\tau(n-1))}
\Bigr] \\
=
\sum_{\tau\in \mathfrak{S}_{n-1}}
\Bigl[
\sum_{\sigma\in \mathfrak{S}_n}\xi_{\sigma(1)}\dotsm \xi_{\sigma(n-1)}\xi_{\sigma(n)}\zeta
+\dotsb+
\sum_{\sigma\in \mathfrak{S}_n}\xi_{\sigma(1)}\zeta \dotsm \xi_{\sigma(n-1)}\xi_{\sigma(n)}
\Bigr]\\
=
(n-1)!
\sum_{\sigma\in \mathfrak{S}_n}
\Bigl[
\xi_{\sigma(1)}\dotsm \xi_{\sigma(n-1)}\xi_{\sigma(n)}\zeta
+\dotsb+
\xi_{\sigma(1)}\zeta \dotsm \xi_{\sigma(n-1)}\xi_{\sigma(n)}
\Bigr],
\end{multline*}
which gives \eqref{l2}.
In the same way we can check \eqref{l23}.

From~\eqref{r14} we now see that
\begin{align*}
\pi_n(\xi_1,\dotsc,\xi_n)
& =
\sum_{\tau\in \mathfrak{S}_{n-2}}
\Bigl[
\xi_{\tau(1)}\dotsm\xi_{\tau(n-2)}\xi_{n-1}\xi_n+
\xi_{\tau(1)}\dotsm\xi_{n-1}\xi_{\tau(n-2)}\xi_n \\
& \qquad {}+  \dotsb +
\xi_{n-1}\xi_{\tau(1)}\dotsm\xi_{\tau(n-2)}\xi_n\Bigr]\\
&\quad {}+\sum_{\tau\in\mathfrak{S}_{n-2}}
\Bigl[
\xi_{\tau(1)}\dotsm\xi_{\tau(n-2)}\xi_n\xi_{n-1}+
\xi_{\tau(1)}\dotsm\xi_{n-1}\xi_n\xi_{\tau(n-2)}\\ 
& \qquad {}+ \dotsb +
\xi_{n-1}\xi_{\tau(1)}\dotsm\xi_n\xi_{\tau(n-2)}\Bigr]\\
&\quad {}+\sum_{\tau\in\mathfrak{S}_{n-2}}
\Bigl[
\xi_{\tau(1)}\dotsm\xi_n\xi_{\tau(n-2)}\xi_{n-1}+
\xi_{\tau(1)}\dotsm\xi_n\xi_{n-1}\xi_{\tau(n-2)} \\ 
& \qquad+
\dotsb+
\xi_{n-1}\xi_{\tau(1)}\dotsm\xi_n\xi_{\tau(n-3)}\xi_{\tau(n-2)}\Bigr]\\
& \quad {} + \dotsb + {}\\
& \quad {}+\sum_{\tau\in \mathfrak{S}_{n-2}}
\Bigl[
\xi_n\xi_{\tau(1)}\dotsm\xi_{\tau(n-2)}\xi_{n-1}+
\xi_n\xi_{\tau(1)}\dotsm\xi_{n-1}\xi_{\tau(n-2)}\\
&\qquad {}+ \dotsb+
\xi_n\xi_{n-1}\xi_{\tau(1)}\dotsm\xi_{\tau(n-3)}\xi_{\tau(n-2)}\Bigr].\\
\end{align*}
Therefore, for each $\sigma\in \mathfrak{S}_{n}$, we have
\begin{align*}
&\pi_n(\xi_{\sigma(1)},\dotsc,\xi_{\sigma(n-1)}\xi_{\sigma(n)},\zeta)\\
= &
\sum_{\tau\in \mathfrak{S}_{n-2}}
\Bigl[
\xi_{\sigma(\tau(1))}\dotsm\xi_{\sigma(\tau(n-2))}\xi_{\sigma(n-1)}\xi_{\sigma(n)}\zeta+
\xi_{\sigma(\tau(1))}\dotsm\xi_{\sigma(n-1)}\xi_{\sigma(n)}\xi_{\sigma(\tau(n-2))}\zeta\\ 
&
\qquad {}+\dotsb+
\xi_{\sigma(n-1)}\xi_{\sigma(n)}\xi_{\sigma(\tau(1))}\dotsm\xi_{\sigma(\tau(n-2))}\zeta\Bigr]\\
& {}+\sum_{\tau\in\mathfrak{S}_{n-2}}
\Bigl[
\xi_{\sigma(\tau(1))}\dotsm\xi_{\sigma(\tau(n-2))}\zeta\xi_{\sigma(n-1)}\xi_{\sigma(n)}+
\xi_{\sigma(\tau(1))}\dotsm\xi_{\sigma(n-1)}\xi_{\sigma(n)}\zeta\xi_{\sigma(\tau(n-2))}\\ 
&
\qquad {}+\dotsb+
\xi_{\sigma(n-1)}\xi_{\sigma(n)}\xi_{\sigma(\tau(1))}\dotsm\zeta\xi_{\sigma(\tau(n-2))}\Bigr]\\ 
&
{}+ \sum_{\tau\in\mathfrak{S}_{n-2}}
\Bigl[
\xi_{\sigma(\tau(1))}\dotsm\zeta\xi_{\sigma(\tau(n-2))}\xi_{\sigma(n-1)}\xi_{\sigma(n)}+
\xi_{\sigma(\tau(1))}\dotsm\zeta\xi_{\sigma(n-1)}\xi_{\sigma(n)}\xi_{\sigma(\tau(n-2))}\\ 
&
\qquad {}+\dotsb+
\xi_{\sigma(n-1)}\xi_{\sigma(n)}\xi_{\sigma(\tau(1))}\dotsm\zeta\xi_{\sigma(\tau(n-3))}\xi_{\sigma(\tau(n-2))}\Bigr]\\
& {}+\dots +{}\\
& + \sum_{\tau\in\mathfrak{S}_{n-2}}
\Bigl[
\zeta\xi_{\sigma(\tau(1))}\dotsm\xi_{\sigma(\tau(n-2))}\xi_{\sigma(n-1)}\xi_{\sigma(n)}+
\zeta\xi_{\sigma(\tau(1))}\dotsm\xi_{\sigma(n-1)}\xi_{\sigma(n)}\xi_{\sigma(\tau(n-2))}\\
&
\qquad {}+\dotsb+
\zeta\xi_{\sigma(n-1)}\xi_{\sigma(n)}\xi_{\sigma(\tau(1))}\dotsm\xi_{\sigma(\tau(n-3))}\xi_{\sigma(\tau(n-2))}\Bigr].\\
\end{align*}
We thus get
\begin{align*}
&\sum_{\sigma\in \mathfrak{S}_n}\pi_n (\xi_{\sigma(1)},\dotsc,\xi_{\sigma(n-1)}\xi_{\sigma(n)},\zeta)
\\
=\sum_{\tau\in\mathfrak{S}_{n-2}}
\Bigl[ &
      \sum_{\sigma\in \mathfrak{S}_n}\xi_{\sigma(\tau(1))}\dotsm\xi_{\sigma(\tau(n-2))}\xi_{\sigma(n-1)}\xi_{\sigma(n)}\zeta \\
      & {}+ \sum_{\sigma\in \mathfrak{S}_n}\xi_{\sigma(\tau(1))}\dotsm\xi_{\sigma(n-1)}\xi_{\sigma(n)}\xi_{\sigma(\tau(n-2))}\zeta \\
      & {}+\dotsb+ \sum_{\sigma\in \mathfrak{S}_n}\xi_{\sigma(n-1)}\xi_{\sigma(n)}\xi_{\sigma(\tau(1))}\dotsm\xi_{\sigma(\tau(n-2))}\zeta\Bigr] \\
      {}+ \sum_{\tau\in\mathfrak{S}_{n-2}}
\Bigl[ &
      \sum_{\sigma\in\mathfrak{S}_n}\xi_{\sigma(\tau(1))}\dotsm\xi_{\sigma(\tau(n-2))}\zeta\xi_{\sigma(n-1)}\xi_{\sigma(n)}\\
      & {} + \sum_{\sigma\in\mathfrak{S}_n}\xi_{\sigma(\tau(1))}\dotsm\xi_{\sigma(n-1)}\xi_{\sigma(n)}\zeta\xi_{\sigma(\tau(n-2))} \\
      & {}+\dotsb+ \sum_{\sigma\in\mathfrak{S}_n}\xi_{\sigma(n-1)}\xi_{\sigma(n)}\xi_{\sigma(\tau(1))}\dotsm\zeta\xi_{\sigma(\tau(n-2))}\Bigr] \\
      {}+{} \sum_{\tau\in\mathfrak{S}_{n-2}}
\Bigl[ &
       \sum_{\sigma\in\mathfrak{S}_n}\xi_{\sigma(\tau(1))}\dotsm\zeta\xi_{\sigma(\tau(n-2))}\xi_{\sigma(n-1)}\xi_{\sigma(n)} \\
       & {}+ \sum_{\sigma\in\mathfrak{S}_n}\xi_{\sigma(\tau(1))}\dotsm\zeta\xi_{\sigma(n-1)}\xi_{\sigma(n)}\xi_{\sigma(\tau(n-2))} \\
       & {}+\dotsb+ \sum_{\sigma\in\mathfrak{S}_n}\xi_{\sigma(n-1)}\xi_{\sigma(n)}\xi_{\sigma(\tau(1))}\dotsm\zeta\xi_{\sigma(\tau(n-3))}
			\xi_{\sigma(\tau(n-2))}\Bigr] \\
		& {}+\dotsb +{} \\
{}+\sum_{\tau\in\mathfrak{S}_{n-2}}
\Bigl[&
      \sum_{\sigma\in\mathfrak{S}_n}\zeta\xi_{\sigma(\tau(1))}\dotsm\xi_{\sigma(\tau(n-2))}\xi_{\sigma(n-1)}\xi_{\sigma(n)} \\
      & {}+ \sum_{\sigma\in\mathfrak{S}_n}\zeta\xi_{\sigma(\tau(1))}\dotsm\xi_{\sigma(n-1)}\xi_{\sigma(n)}\xi_{\sigma(\tau(n-2))} \\
      & {}+\dotsb+ \sum_{\sigma\in\mathfrak{S}_n}\zeta\xi_{\sigma(n-1)}\xi_{\sigma(n)}\xi_{\sigma(\tau(1))}\dotsm\xi_{\sigma(\tau(n-3))}\xi_{\sigma(\tau(n-2))}\Bigr]\\
=
\sum_{\tau\in \mathfrak{S}_{n-2}}
\Bigl[
&\sum_{\sigma\in\mathfrak{S}_n}\xi_{\sigma(1)}\dotsm\xi_{\sigma(n-2)}\xi_{\sigma(n-1)}\xi_{\sigma(n)}\zeta+
\sum_{\sigma\in\mathfrak{S}_n}\xi_{\sigma(1)}\dotsm\xi_{\sigma(n-2)}\xi_{\sigma(n-1)}\xi_{\sigma(n)}\zeta\\
& {}+\dotsb+
\sum_{\sigma\in\mathfrak{S}_n}\xi_{\sigma(1)}\xi_{\sigma(2)}\xi_{\sigma(3)}\dotsm\xi_{\sigma(n)}\zeta\Bigr] \\
{}+ \sum_{\tau\in\mathfrak{S}_{n-2}}
\Bigl[
&\sum_{\sigma\in\mathfrak{S}_n}\xi_{\sigma(1)}\dotsm\xi_{\sigma(n-2)}\zeta\xi_{\sigma(n-1)}\xi_{\sigma(n)}+
\sum_{\sigma\in\mathfrak{S}_n}\xi_{\sigma(1)}\dotsm\xi_{\sigma(n-2)}\xi_{\sigma(n-1)}\zeta\xi_{\sigma(n)}\\
& {}+\dotsb +
\sum_{\sigma\in\mathfrak{S}_n}\xi_{\sigma(1)}\xi_{\sigma(2)}\xi_{\sigma(3)}\dotsm\zeta\xi_{\sigma(n)}\Bigr] \\
{}+ \sum_{\tau\in\mathfrak{S}_{n-2}}
\Bigl[
&\sum_{\sigma\in\mathfrak{S}_n}\xi_{\sigma(1)}\dotsm\zeta\xi_{\sigma(n-2)}\xi_{\sigma(n-1)}\xi_{\sigma(n)}+
\sum_{\sigma\in \mathfrak{S}_n}\xi_{\sigma(1)}\dotsm\zeta\xi_{\sigma(n-2)}\xi_{\sigma(n-1)}\xi_{\sigma(n)}\\
& {}+\dotsb + 
\sum_{\sigma\in\mathfrak{S}_n}\xi_{\sigma(1)}\xi_{\sigma(2)}\xi_{\sigma(3)}\dotsm\zeta\xi_{\sigma(n-1)}\xi_{\sigma(n)}\Bigr] \\
{} +\dotsb +{} \\
{}+\sum_{\tau\in\mathfrak{S}_{n-2}}
\Bigl[
&\sum_{\sigma\in\mathfrak{S}_n}\zeta\xi_{\sigma(1)}\dotsm\xi_{\sigma(n-2)}\xi_{\sigma(n-1)}\xi_{\sigma(n)}+
\sum_{\sigma\in\mathfrak{S}_n}\zeta\xi_{\sigma(1)}\dotsm\xi_{\sigma(n-2)}\xi_{\sigma(n-1)}\xi_{\sigma(n)}\\
& {}+\dotsb+
\sum_{\sigma\in\mathfrak{S}_n}\zeta\xi_{\sigma(1)}\xi_{\sigma(2)}\xi_{\sigma(3)}\dotsm\xi_{\sigma(n-1)}\xi_{\sigma(n)}\Bigr]\\
= \sum_{\tau\in\mathfrak{S}_{n-2}}\sum_{\sigma\in\mathfrak{S}_n}
\Bigl[
&(n-1)\xi_{\sigma(1)}\dotsm\xi_{\sigma(n-2)}\xi_{\sigma(n-1)}\xi_{\sigma(n)}\zeta\Bigr]\\
{}+ \sum_{\tau\in\mathfrak{S}_{n-2}}\sum_{\sigma\in\mathfrak{S}_n}
\Bigl[
&\xi_{\sigma(1)}\dotsm\xi_{\sigma(n-2)}\zeta\xi_{\sigma(n-1)}\xi_{\sigma(n)}+
(n-2)
\xi_{\sigma(1)}\dotsm\xi_{\sigma(n-2)}\xi_{\sigma(n-1)}\zeta\xi_{\sigma(n)}\Bigr] \\ 
{}+ \sum_{\tau\in\mathfrak{S}_{n-2}}\sum_{\sigma\in\mathfrak{S}_n}
\Bigl[
& 2\xi_{\sigma(1)}\dotsm\zeta\xi_{\sigma(n-2)}\xi_{\sigma(n-1)}\xi_{\sigma(n)}+
(n-3)
\xi_{\sigma(1)}\dotsm\xi_{\sigma(n-2)}\zeta\xi_{\sigma(n-1)}\xi_{\sigma(n)}\Bigr]\\ {}+ \dotsb +{}  \\
{} +\sum_{\tau\in\mathfrak{S}_{n-2}}\sum_{\sigma\in\mathfrak{S}_n}
\Bigl[
& k\xi_{\sigma(1)}\dotsm\zeta\xi_{\sigma(n-k)}\dotsm\xi_{\sigma(n)}+
(n-k-1)
\xi_{\sigma(1)}\dotsm\xi_{\sigma(n-k)}\zeta\dotsm\xi_{\sigma(n)}\Bigr] \\
{}+\dotsb +{}\\
{}+\sum_{\tau\in\mathfrak{S}_{n-2}}\sum_{\sigma\in\mathfrak{S}_n}
\Bigl[
& (n-1)\zeta\xi_{\sigma(1)}\dotsm\xi_{\sigma(n-2)}\xi_{\sigma(n-1)}\xi_{\sigma(n)}\Bigr]\\
= (n-2)!\sum_{\sigma\in\mathfrak{S}_n}
\Bigl[
& (n-1)\xi_{\sigma(1)}\dotsm\xi_{\sigma(n-2)}\xi_{\sigma(n-1)}\xi_{\sigma(n)}\zeta\Bigr] \\
{}+ (n-2)!\sum_{\sigma\in\mathfrak{S}_n}
\Bigl[
&\xi_{\sigma(1)}\dotsm\xi_{\sigma(n-2)}\zeta\xi_{\sigma(n-1)}\xi_{\sigma(n)}+
(n-2)
\xi_{\sigma(1)}\dotsm\xi_{\sigma(n-2)}\xi_{\sigma(n-1)}\zeta\xi_{\sigma(n)}\Bigr]\\
{}+ (n-2)!\sum_{\sigma\in\mathfrak{S}_n}
\Bigl[
& 2\xi_{\sigma(1)}\dotsm\zeta\xi_{\sigma(n-2)}\xi_{\sigma(n-1)}\xi_{\sigma(n)}+
(n-3)
\xi_{\sigma(1)}\dotsm\xi_{\sigma(n-2)}\zeta\xi_{\sigma(n-1)}\xi_{\sigma(n)}\Bigr]\\
{}+ \dotsb + {}\\
+ (n-2)!\sum_{\sigma\in\mathfrak{S}_n}
\Bigl[
& k\xi_{\sigma(1)}\dotsm\zeta\xi_{\sigma(n-k)}\dotsm\xi_{\sigma(n)}+
(n-k-1)
\xi_{\sigma(1)}\dotsm\xi_{\sigma(n-k)}\zeta\dotsm\xi_{\sigma(n)}\Bigr] \\
{}+\dotsb+{}\\
+(n-2)!\sum_{\sigma\in\mathfrak{S}_n}
\Bigl[
&(n-1)\zeta\xi_{\sigma(1)}\dotsm\xi_{\sigma(n-2)}\xi_{\sigma(n-1)}\xi_{\sigma(n)}\Bigr],
\end{align*}
which gives \eqref{l4}.
\end{proof}

\begin{theorem}\label{main}
Let $X$ and $Y$ be Banach spaces, and
let $P\colon\mathcal{A}(X)\to Y$ be a continuous $n$-homogeneous polynomial.
Suppose that $X^*$ has the bounded approximation property.
Then the following conditions are equivalent:
\begin{enumerate}
\item
the polynomial $P$ is orthogonally additive, i.e.,
$P(S+T)=P(S)+P(T)$ whenever $S$, $T\in\mathcal{A}(X)$ are such that $ST=TS=0$;
\item
the polynomial $P$ is orthogonally additive on $\mathcal{F}(X)$, i.e.,
$P(S+T)=P(S)+P(T)$ whenever $S$, $T\in\mathcal{F}(X)$ are such that $ST=TS=0$;
\item
there exists a unique continuous linear map $\Phi\colon\mathcal{A}(X)\to Y$ such that
$P(T)=\Phi(T^n)$ for each $T\in\mathcal{A}(X)$.
\end{enumerate}
\end{theorem}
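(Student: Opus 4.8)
The plan is to establish the circle of implications $(3)\Rightarrow(1)\Rightarrow(2)\Rightarrow(3)$. The first two are routine: if $P(T)=\Phi(T^n)$ with $\Phi$ linear and $ST=TS=0$, then in the expansion of $(S+T)^n$ every monomial containing both $S$ and $T$ has a consecutive factor $ST$ or $TS$ and so vanishes, giving $(S+T)^n=S^n+T^n$ and hence $P(S+T)=P(S)+P(T)$; and $(1)\Rightarrow(2)$ is immediate since $\mathcal F(X)\subseteq\mathcal A(X)$. The substance is $(2)\Rightarrow(3)$. First I would reduce to the case that $Y$ is a dual Banach space: replacing $P$ by $\iota_Y\circ P\colon\mathcal A(X)\to Y^{**}$, once one has a continuous linear $\widetilde\Phi\colon\mathcal A(X)\to Y^{**}$ with $\iota_Y(P(T))=\widetilde\Phi(T^n)$, the identity $(x\otimes f)^n=f(x)^{n-1}\,x\otimes f$ shows that $\{T^n:T\in\mathcal A(X)\}$ has dense linear span in $\mathcal A(X)$, so $\widetilde\Phi$ automatically takes values in $\iota_Y(Y)$ and descends to the required $\Phi$. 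Second, since $X^*$ has the bounded approximation property, $\mathcal A(X)$ possesses a two-sided bounded approximate identity that may be taken inside $\mathcal F(X)$; fix such a net $(e_\lambda)_{\lambda\in\Lambda}$ and an ultrafilter $\mathcal U$ as in the paragraph preceding Lemma~\ref{l177}, and let $\varphi$, $\varphi'$ be the associated symmetric multilinear maps. If $n=1$ the statement is trivial, so assume $n\ge 2$; by Lemma~\ref{l177} it then suffices to verify
\begin{equation*}
\varphi(a_1,\dots,a_n)=\frac1{n!}\sum_{\sigma\in\mathfrak S_n}\varphi'\bigl(a_{\sigma(1)},\dots,a_{\sigma(n-1)}a_{\sigma(n)}\bigr)\qquad(a_1,\dots,a_n\in\mathcal A(X)).\tag{$\star$}
\end{equation*}

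Both sides of $(\star)$ are continuous in $(a_1,\dots,a_n)$ and $\mathcal F(X)$ is dense in $\mathcal A(X)$, so $(\star)$ need only be checked for $T_1,\dots,T_n\in\mathcal F(X)$. Fix such a tuple and, by Lemma~\ref{l1525}, a subalgebra $\mathcal M\cong\mathbb M_k$ of $\mathcal F(X)$ containing $T_1,\dots,T_n$, with identity $e$; from the construction one has $\mathcal M=e\mathcal B(X)e$, so $e\,e_\lambda\,e\in\mathcal M$ for every $\lambda$, and the restriction to $\mathcal M$ of the linear map $\Phi_0\colon\mathcal F(X)\to Y$ of Corollary~\ref{crf} is continuous because $\mathcal M$ is finite-dimensional. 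Write $\Pi=\pi_n(T_1,\dots,T_n)\in\mathcal M$. Using \eqref{m1} one finds that for each $\lambda$ the right-hand side of $(\star)$ equals $\tfrac1{(n!)^2}\lim_{\mathcal U}\Phi_0(C_\lambda)$, where $C_\lambda=\sum_{\sigma}\pi_n(T_{\sigma(1)},\dots,T_{\sigma(n-1)}T_{\sigma(n)},e_\lambda)$ is precisely the left-hand side of \eqref{l4} evaluated at $\xi_i=T_i$, $\zeta=e_\lambda$.

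The heart of the proof is then to circumvent the possible discontinuity of $\Phi_0$ by means of Lemma~\ref{rfc3}. Comparing \eqref{l2}, \eqref{l23} and \eqref{l4} yields, in $\mathcal F(X)$, the identity $C_\lambda=A_\lambda+B_\lambda-n(n-2)!\,R_\lambda$, where $A_\lambda=\sum_\sigma\pi_n(T_{\sigma(1)},\dots,T_{\sigma(n-1)},T_{\sigma(n)}e_\lambda)$, $B_\lambda=\sum_\sigma\pi_n(T_{\sigma(1)},\dots,T_{\sigma(n-1)},e_\lambda T_{\sigma(n)})$, and $R_\lambda$ is the sum over $\sigma\in\mathfrak S_n$ of the $n-1$ words obtained by inserting $e_\lambda$ in an interior position of $T_{\sigma(1)}\cdots T_{\sigma(n)}$. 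Applying $\Phi_0$ and \eqref{m1} once more gives $\Phi_0(A_\lambda)=n!\sum_\sigma\varphi(T_{\sigma(1)},\dots,T_{\sigma(n-1)},T_{\sigma(n)}e_\lambda)$ and similarly for $B_\lambda$; since $T_{\sigma(n)}e_\lambda\to T_{\sigma(n)}$ and $e_\lambda T_{\sigma(n)}\to T_{\sigma(n)}$ in norm and $\varphi$ is continuous, $\lim_{\mathcal U}\Phi_0(A_\lambda)=\lim_{\mathcal U}\Phi_0(B_\lambda)=(n!)^2\varphi(T_1,\dots,T_n)$. On the other hand every word in $R_\lambda$ has $e_\lambda$ flanked on both sides by nonempty products of the $T_i$, hence lies in $\mathcal M=e\mathcal B(X)e$; thus $R_\lambda\in\mathcal M$, $R_\lambda\to(n-1)\Pi$ in norm, and continuity of $\Phi_0|_{\mathcal M}$ together with \eqref{m1} gives $\lim_{\mathcal U}\Phi_0(R_\lambda)=(n-1)\Phi_0(\Pi)=(n-1)n!\,\varphi(T_1,\dots,T_n)$. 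As $n(n-1)(n-2)!=n!$, this yields $\lim_{\mathcal U}\Phi_0(C_\lambda)=(n!)^2\varphi(T_1,\dots,T_n)$, i.e. the right-hand side of $(\star)$ equals $\varphi(T_1,\dots,T_n)$, which proves $(\star)$.

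Granting $(\star)$, Lemma~\ref{l177} provides a continuous linear $\Phi\colon\mathcal A(X)\to Y$ with $\varphi(a_1,\dots,a_n)=\frac1{n!}\Phi\bigl(\sum_\sigma a_{\sigma(1)}\cdots a_{\sigma(n)}\bigr)$, whence $P(a)=\varphi(a,\dots,a)=\Phi(a^n)$ for all $a\in\mathcal A(X)$; uniqueness follows from Corollary~\ref{crf}, since any continuous linear representation of $P$ must restrict on $\mathcal F(X)$ to the unique linear one there and hence coincide with $\Phi$ on the dense subspace $\mathcal F(X)$. I expect the genuine difficulty to lie exactly in the non-continuity of $\Phi_0$ (which the paper itself flags after Corollary~\ref{crf}): the combinatorial identity $C_\lambda=A_\lambda+B_\lambda-n(n-2)!\,R_\lambda$ is what makes the argument go through, by separating a "boundary" part of $C_\lambda$ which becomes visible to the continuous map $\varphi$ through \eqref{m1} (and can therefore be passed to the limit) from an "interior" part which stays inside the fixed finite-dimensional algebra $\mathcal M$. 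A secondary technical point to pin down is the input fact that the bounded approximation property of $X^*$ furnishes a two-sided bounded approximate identity for $\mathcal A(X)$ lying in $\mathcal F(X)$ (the left half coming from the bounded approximation property of $X$, the right half from an adjoint/local-reflexivity argument), after which a standard construction merges the two one-sided identities.
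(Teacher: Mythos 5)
Your proposal is correct and follows essentially the same route as the paper: the identity $C_\lambda=A_\lambda+B_\lambda-n(n-2)!\,R_\lambda$ extracted from Lemma~\ref{rfc3} is precisely the paper's step of adding \eqref{l15} and \eqref{l16} and subtracting \eqref{l17}, with the interior-insertion terms handled on the finite-dimensional algebra $\mathcal{M}$ (where $\Phi_0$ is automatically continuous) and the boundary terms by continuity of $\varphi$, after which Lemma~\ref{l177} and the descent from $Y^{**}$ to $Y$ finish the argument exactly as in the text. The only cosmetic deviations are your descent to $Y$ via density of the span of $n$-th powers (the paper uses local units $S$ with $TS=ST=T$) and doing the combinatorics before rather than after applying $\Phi_0$; both are equivalent.
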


\begin{proof}
It is clear that $(1)\Rightarrow(2)$ and that $(3)\Rightarrow(1)$.
We will henceforth prove that $(2)\Rightarrow(3)$.

Since $P$ is orthogonally additive on $\mathcal{F}(X)$, Corollary~\ref{crf} yields
a linear map $\Phi_0\colon\mathcal{F}(X)\to Y$ such that
\begin{equation}\label{crf1}
P(T)=\Phi_0(T^n) \quad (T\in\mathcal{F}(X)).
\end{equation}
It seems to be appropriate to emphasize that we don't know whether or not $\Phi_0$ is continuous, despite the continuity of $P$.

Let $\varphi\colon\mathcal{A}(X)^n\to Y$ be the continuous symmetric $n$-linear map associated with $P$.
By considering the natural embedding of $Y$ into the second dual $Y^{**}$ we regard $Y$ as a closed linear
subspace of $Y^{**}$, and we regard $\varphi$ as a continuous $n$-linear map
from $\mathcal{A}(X)^n$ into $Y^{**}$ henceforth.
Since $X^*$ has the bounded approximation property, it follows that $\mathcal{A}(X)$
has a bounded approximate identity $(E_\lambda)_{\lambda\in\Lambda}$
(\cite[Theorem~2.9.37(iii) and (v)]{D}) and we may assume that
$(E_\lambda)_{\lambda\in\Lambda}$ lies in $\mathcal{F}(X)$.
Let $\varphi'\colon\mathcal{A}(X)^{n-1}\to Y^{**}$ be as defined in the beginning of this section.
The polarization of the identity $\eqref{crf1}$ yields
\begin{equation}\label{e1547}
\varphi(T_1,\dotsc,T_n)=
\frac{1}{n!}\Phi_0\bigl(\pi_n(T_1,\dotsc,T_n)\bigr) \quad ((T_1,\dotsc,T_n)\in\mathcal{F}(X)^n)
\end{equation}
where $\pi_n$ is the noncommutative polynomial introduced in Lemma~\ref{rfc3}.

We claim that
\begin{equation}\label{f1720}
\varphi(T_1,\dotsc,T_n)=
\frac{1}{n!}
\sum_{\sigma\in\mathfrak{S}_n}
\varphi'(T_{\sigma(1)},\dotsc,T_{\sigma(n-1)}T_{\sigma(n)})
\end{equation}
for each $(T_1,\dotsc,T_n)\in\mathcal{A}(X)^n$.
Since $\mathcal{A}(X)$ is the closed linear span of the rank-one operators and both $\varphi$ and $\varphi'$
are continuous, we are reduced to proving~\eqref{f1720} in the case where $T_1,\dotsc,T_n$ are rank-one operators.
Let $T_1,\dotsc,T_n\in\mathcal{F}(X)$ be rank-one operators, and let $\lambda\in\Lambda$.
On account of \eqref{l2}, \eqref{l23}, \eqref{l4}, and \eqref{e1547}, we have
\begin{multline}\label{l15}
\sum_{\sigma\in\mathfrak{S}_n}\varphi(T_{\sigma(1)},\dotsc,T_{\sigma(n)}E_\lambda)\\ 
= \frac{1}{n}\Phi_0\left(\sum_{\sigma\in\mathfrak{S}_n}
\Bigl[
T_{\sigma(1)}\dotsm T_{\sigma(n-1)}T_{\sigma(n)}E_\lambda
+\dotsb+
T_{\sigma(1)}E_\lambda \dotsm T_{\sigma(n-1)}T_{\sigma(n)}
\Bigr]\right) ,
\end{multline}
\begin{multline}\label{l16}
\sum_{\sigma\in\mathfrak{S}_n}\varphi(T_{\sigma(1)},\dotsc,E_\lambda T_{\sigma(n)}) \\ 
= \frac{1}{n}\Phi_0\left(\sum_{\sigma\in\mathfrak{S}_n}
\Bigl[
T_{\sigma(1)}\dotsm T_{\sigma(n-1)}E_\lambda T_{\sigma(n)}
+\cdots+
E_\lambda T_{\sigma(1)} \dotsm T_{\sigma(n-1)}T_{\sigma(n)}
\Bigr]\right),
\end{multline}
\begin{multline}\label{l17}
\sum_{\sigma\in\mathfrak{S}_n}\varphi(T_{\sigma(1)},\dotsc,T_{\sigma(n-1)}T_{\sigma(n)},E_\lambda) \\ 
= \frac{1}{n}\Phi_0\left(\sum_{\sigma\in\mathfrak{S}_n}
\Bigl[
T_{\sigma(1)}\dotsm T_{\sigma(n-1)}T_{\sigma(n)}E_\lambda
+
E_\lambda T_{\sigma(1)} \dotsm T_{\sigma(n-1)}T_{\sigma(n)}
\Bigr]\right)\\
{}+\frac{n-2}{n(n-1)}\Phi_0\left(\sum_{\sigma\in\mathfrak{S}_n}
\Bigl[
T_{\sigma(1)}\dotsm T_{\sigma(n-1)}E_\lambda T_{\sigma(n)}
+\dotsb+
T_{\sigma(1)}E_\lambda \dotsm T_{\sigma(n-1)}T_{\sigma(n)}
\Bigr]\right).
\end{multline}
Adding~\eqref{l15} and~\eqref{l16}, and then subtracting~\eqref{l17} we can assert that
\begin{multline}\label{l18}
\sum_{\sigma\in\mathfrak{S}_n}\varphi(T_{\sigma(1)},\dotsc,T_{\sigma(n)}E_\lambda)+
\sum_{\sigma\in\mathfrak{S}_n}\varphi(T_{\sigma(1)},\dotsc,E_\lambda T_{\sigma(n)})\\ 
{}- \sum_{\sigma\in\mathfrak{S}_n}
\varphi(T_{\sigma(1)},\dotsc,T_{\sigma(n-1)}T_{\sigma(n)},E_\lambda)\\
= \frac{1}{n-1}\Phi_0\left(\sum_{\sigma\in\mathfrak{S}_n}
\Bigl[
T_{\sigma(1)}\dotsm T_{\sigma(n-1)}E_\lambda T_{\sigma(n)}
+\dotsb+
T_{\sigma(1)}E_\lambda \dotsm T_{\sigma(n-1)}T_{\sigma(n)}
\Bigr]\right).
\end{multline}
Let $\mathcal{M}$ be the subalgebra of $\mathcal{F}(X)$ generated by $T_1,\dotsc,T_n$.
Then $\mathcal{M}$ is finite-dimensional, a property which implies that the restriction
of $\Phi_0$ to $\mathcal{M}$ is continuous, of course.
Further, it is straightforward to check that $T_iST_j\in\mathcal{M}$ for all $S\in\mathcal{B}(X)$
and $i,j\in\{1,\dotsc,n\}$. Accordingly, for each $\sigma\in\mathfrak{S}_n$, all the nets
\[
\bigl(T_{\sigma(1)}\dotsb T_{\sigma(n-1)}E_\lambda T_{\sigma(n)}\bigr)_{\lambda\in\Lambda}, \, \dotsc \,
,\bigl(T_{\sigma(1)}E_\lambda\dotsb T_{\sigma(n-1)} T_{\sigma(n)}\bigr)_{\lambda\in\Lambda}
\]
lie in $\mathcal{M}$, and,
since $(E_\lambda)_{\lambda\in\Lambda}$ is a bounded approximate identity for $\mathcal{A}(X)$,
each of them converges to $T_{\sigma(1)}\dotsb T_{\sigma(n-1)}T_{\sigma(n)}$ in the operator norm.
Therefore, taking limits along $\mathcal{U}$ on both sides of the equation \eqref{l18} (and using
the continuity of $\Phi_0$ on $\mathcal{M}$), we see that
\begin{multline}\label{l19}
\sum_{\sigma\in\mathfrak{S}_n}\varphi(T_{\sigma(1)},\dotsc,T_{\sigma(n)})+
\sum_{\sigma\in\mathfrak{S}_n}\varphi(T_{\sigma(1)},\dotsc,T_{\sigma(n)})-
\sum_{\sigma\in\mathfrak{S}_n}
\varphi'(T_{\sigma(1)},\dotsc,T_{\sigma(n-1)}T_{\sigma(n)})\\ =
\frac{1}{n-1}\Phi_0\left(\sum_{\sigma\in\mathfrak{S}_n}
\Bigl[
T_{\sigma(1)}\dotsm T_{\sigma(n-1)} T_{\sigma(n)}
+\dotsb+
T_{\sigma(1)} \dotsm T_{\sigma(n-1)}T_{\sigma(n)}
\Bigr]\right)\\ =
\Phi_0\left(\sum_{\sigma\in\mathfrak{S}_n}
T_{\sigma(1)}\dotsm T_{\sigma(n-1)} T_{\sigma(n)}\right).
\end{multline}
By using the symmetry of $\varphi$ and \eqref{e1547} in \eqref{l19} we obtain
\begin{equation*}
2n!\varphi(T_1,\dotsc,T_n) - \sum_{\sigma\in\mathfrak{S}_n}
\varphi'(T_{\sigma(1)},\dotsc,T_{\sigma(n-1)}T_{\sigma(n)})=
n!\varphi(T_1,\dotsc,T_n),
\end{equation*}
which yields \eqref{f1720}.

Having disposed of identity~\eqref{f1720}, we can now apply
Lemma~\ref{l177} to obtain a continuous linear map $\Phi\colon\mathcal{A}(X)\to Y^{**}$
such that
\begin{equation}\label{aa}
\varphi(T_1,\ldots,T_n)=
\frac{1}{n!}
\Phi\left(\sum_{\sigma\in\mathfrak{S}_n}T_{\sigma(1)}\cdots T_{\sigma(n)}\right)
\end{equation}
for each $(T_1,\ldots,T_n)\in \mathcal{A}(X)$.
Our next objective is to show that the range of $\Phi$ lies actually in $Y$.
Let $T\in\mathcal{F}(X)$. On account of Lemma~\ref{l1525},
there exists $S\in\mathcal{F}(X)$ such that $TS=ST=T$.
From \eqref{aa} we see that
$\Phi(T)=\varphi(T,S,\dotsc,S)\in Y$.
Since $\mathcal{F}(X)$ is dense in $\mathcal{A}(X)$, $\Phi$ is continuous,
and $Y$ is closed in $Y^{**}$, it may be concluded that $\Phi(\mathcal{A}(X))\subset Y$.

Our final task is to prove the uniqueness of the map $\Phi$.
Suppose that $\Psi\colon\mathcal{A}(X)\to Y$ is a continuous linear map
such that $P(T)=\Psi(T^n)$ for each $T\in\mathcal{A}(X)$.
By Corollary~\ref{crf}, $\Psi(T)=\Phi(T)\bigl(=\Phi_0(T)\bigr)$
for each $T\in\mathcal{F}(X)$.
Since $\mathcal{F}(X)$ is dense in $\mathcal{A}(X)$, and both
$\Phi$ and $\Psi$ are continuous, it follows
that $\Psi(T)=\Phi(T)$ for each $T\in\mathcal{A}(X)$.
\end{proof}

\begin{remark}
Let $X$ and $Y$ be as in Theorem~\ref{main}, let $P\colon\mathcal{A}(X)\to Y$ be
a continuous orthogonally additive $n$-homogeneous polynomial, and let $\varphi\colon\mathcal{A}(X)^n\to Y$ be
the continuous symmetric $n$-linear map associated with $P$.
Suppose that  $(E_\lambda)_{\lambda\in\Lambda}$ is any bounded approximate identity for $\mathcal{A}(X)$.
We already know that there exists a continuous linear map  $\Phi\colon\mathcal{A}(X)\to Y$
such that $P(T)=\Phi(T^n)$ for each $T\in\mathcal{A}(X)$. The polarization of this representation gives
\begin{equation}\label{oo}
\varphi(T_1,\dotsc,T_n)=\frac{1}{n!}\Phi\left(\sum_{\sigma\in\mathfrak{S}_n}T_{\sigma(1)}\cdots T_{\sigma(n)}\right)
\end{equation}
for each $(T_1,\ldots,T_n)\in \mathcal{A}(X)$.
If $T\in\mathcal{A}(X)$, then each of the nets
\[
\bigl(TE_\lambda\dotsb E_\lambda\bigr)_{\lambda\in\Lambda},
\bigl(E_\lambda T\dotsb E_\lambda\bigr)_{\lambda\in\Lambda},\dotsc,
\bigl(E_\lambda\dotsb TE_\lambda\bigr)_{\lambda\in\Lambda},
\bigl(E_\lambda\dotsb E_\lambda T\bigr)_{\lambda\in\Lambda},
\]
converges to $T$ in the operator norm, and using \eqref{oo} and the continuity of $\Phi$ we see that the net
$\bigl(\varphi(T,E_\lambda,\dotsc,E_\lambda)\bigr)_{\lambda\in\Lambda}$ converges to $\Phi(T)$ in norm.
Accordingly, the map $\Phi$  is necessarily given by
\begin{equation}\label{ooo}
\Phi(T)=\lim_{\lambda\in\Lambda}\varphi(T,E_\lambda,\dotsc,E_\lambda) \quad (T\in\mathcal{A}(X)),
\end{equation}
where there is no need for taking the limit with respect to any weak topology along any ultrafilter. Note the similarity of \eqref{ooo} with \eqref{m0}.
\end{remark}

\end{document}